\documentclass[11pt]{article}
\usepackage[utf8]{inputenc}
\usepackage{amsmath}
\usepackage{amssymb}
\usepackage{amsfonts}
\usepackage{amsthm}
\usepackage{mathrsfs} 
\usepackage{bm}
\usepackage{enumerate}
\usepackage{bbm}
\usepackage{tikz}
\usepackage{csvsimple}
\usepackage{centernot}
\usepackage{hyperref}
\usepackage[ruled,vlined]{algorithm2e}
\usepackage[margin=1in]{geometry}
\usepackage{comment}

\DeclareMathOperator{\lcm}{lcm}

\newcommand{\getColor}[1]{%
  \ifnum#1=0
    \colorlet{dotColor}{blue}%
  \else\ifnum#1=1
    \colorlet{dotColor}{red}%
  \else\ifnum#1=2
    \colorlet{dotColor}{green}%
  \fi\fi\fi
}

\hypersetup{
    colorlinks=true,
    linkcolor=blue,
    filecolor=magenta,
    urlcolor=blue,
    citecolor=blue,
    backref=true
}

\makeatletter
\newtheorem*{rep@theorem}{\rep@title}
\newcommand{\newreptheorem}[2]{%
\newenvironment{rep#1}[1]{%
 \def\rep@title{#2 \ref{##1}}%
 \begin{rep@theorem}}%
 {\end{rep@theorem}}}
\makeatother

\numberwithin{equation}{section}

\makeatletter
\renewenvironment{proof}[1][\proofname]{\par
  \vspace{-\topsep}
  \pushQED{\qed}%
  \normalfont
  \topsep0pt \partopsep0pt 
  \trivlist
  \item[\hskip\labelsep
        \itshape
    #1\@addpunct{.}]\ignorespaces
}{%
  \popQED\endtrivlist\@endpefalse
  \addvspace{6pt plus 6pt} 
}
\makeatother

\newtheorem{thm}{Theorem}
\newreptheorem{thm}{Theorem}

\newtheorem{result}{Result}[section]
\newtheorem{lem}[result]{Lemma}

\newtheorem{cor}[result]{Corollary}

\theoremstyle{definition}
\newtheorem{rmk}[result]{Remark}

\theoremstyle{remark}

\newcommand{\hide}[1]{}
\newcommand{\edit}[1]{}

\newcommand{\rough}[1]{}
\definecolor{darkgreen}{RGB}{75,150,75}
\newcommand{\review}[1]{}

\newcommand{\hides}[1]{}
\newcommand{\pub}[1]{}

\title{Three-color van der Waerden numbers grow super-exponentially}
\author{Jacob Fox\thanks{Department of Mathematics, Stanford University, Stanford, CA 94305. Email: {\tt jacobfox@stanford.edu}. Research supported by NSF Awards DMS-2452737 and DMS-215412.} 
\and Zach Hunter\thanks{Department of Mathematics, ETH, Z\"rich, Switzerland. Email: {\tt zach.hunter@math.ethz.ch}. Research supported in part by SNSF grant 200021-228014.}}
\date{\today}

\begin{document}

\maketitle

\begin{abstract}
For $k$ sufficiently large, we show that there is a three-coloring of the first $2^{k (\log^* k)/4}$ positive integers without any monochromatic $k$-term arithmetic progressions. Thus, the three-color van der Waerden number $w(k;3)$ grows faster than any exponential in $k$. We further prove a new lower bound on multicolor van der Waerden numbers which resolves a problem of Erd\H{o}s and Graham on canonical van der Waerden numbers.

\end{abstract}
\section{Introduction}\label{sec:intro}

The {\it van der Waerden number} $w(k;r)$ is the minimum positive integer $N$ such that every $r$-coloring of $[N]:=\{1,2,\ldots,N\}$ has a monochromatic $k$-term arithmetic progression. That these numbers exist is the celebrated theorem of van der Waerden \cite{vanderWaerden}. Schur had conjectured it about two decades earlier in order to prove another conjecture of his on consecutive quadratic residues (see \cite{BR,Soifer}).

The original proof gives an enormous upper bound on $w(k;r)$ that grows in the Ackermann hierarchy. Over the past century, estimating van der Waerden numbers has remained a fundamental open problem and mathematicians have speculated much about the true growth of $w(k;r)$. The first primitive recursive upper bound was proved by Shelah  \cite{Shelah} in 1988. This was subsequently improved by Gowers \cite{Gowers01} in 2001 through a new proof of Szemer\'edi's theorem that introduced higher-order Fourier analysis. It gave a constant tower height bound; specifically $w(k;r) \leq 2^{2^{r^{2^{2^{k+9}}}}}$. In the case where $k$ is fixed, a further improvement was recently proved by Leng, Sah, and Sawhney \cite{LSS} through quasi-polynomial bounds on the inverse theorem for Gowers uniformity norms \cite{LSS2}.

There has also been much interest in studying lower bounds on van der Waerden numbers. Erd\H{o}s and Rado \cite{ErRa52} in 1952 proved using a probabilistic argument that $w(k;r) \geq \sqrt{2kr^{k}}$. Berlekamp \cite{Berlekamp} in 1968 proved $w(p+1;2) \geq p2^p$ for prime $p$  using an algebraic construction. Erd\H{o}s and Lov\'asz \cite{erdos lovasz} in the paper that introduces the Lov\'asz local lemma proved that $w(k;r) \geq r^{k-1}/(4k)$. Since then, there have been several lower-order improvements on this bound using delicate enhancements, beginning with Szab\'o \cite{szabo} in 1990 through Kozik and Shabanov \cite{KS16} in 2016 (see \cite{KS16} for a full history). The first exponential improvement over the local lemma bound was given by Hunter \cite{Hunter} for fixed $r \geq 5$. In the case $r \geq 6$ is a fixed multiple of $3$, it gives a bound of the form $w(k;r) \geq (3^{r/3})^{(1-o(1))k}$. 
 
Since the 1930s, Erd\H{o}s was very interested in the growth of van der Waerden numbers. In fact, Erd\H{o}s and Tur\'an \cite{ErTu} in 1936 were motivated (see \cite{erdos1975}) to conjecture what is now known as Szemer\'edi's theorem in order to get better bounds for van der Waerden numbers. In particular, Erd\H{o}s \cite{Erdos1980,Erdos1981,Erdos1981b} offered \$500  to prove or disprove that $w(k;2)$ has super-exponential growth in $k$, that is, $\limsup_{k \to \infty} w(k;2)^{1/k} = \infty$. He guessed this holds \cite{erdos1975}, although he thought that this would be very difficult to prove. An opposing conjecture appearing in \cite{GRS,Monroe,XLL} is that $\lim_{k \to \infty} w(k;r)^{1/k}= r$ for all $r$.

We prove that van der Waerden numbers for three (or more) colors have super-exponential growth in $k$. That is, $\lim_{k \to \infty} w(k;r)^{1/k}= \infty$ if $r \geq 3$, so the opposing conjecture is false\footnote{Of course, the previously stated lower bound from \cite{Hunter} already disproved this conjecture when $r\ge 5$. But we establish this with a substantially stronger lower bound.} for $r \geq 3$.  We recall that the iterated logarithm $\log^* k$ is the number of times the logarithm function must be iteratively applied to $k$ before the result is at most $1$.

\begin{thm}\label{super-exponential}
For $k$ sufficiently large, $w(k;3) > 2^{k(\log^* k)/4}$.     
\end{thm}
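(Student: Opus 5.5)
We build the coloring by an iterated ``product'' construction that gains a constant factor in the exponent at each of about $\log^* k$ levels. The basic mechanism uses two colors, red and blue, to encode a two-coloring of a much shorter interval at a coarse scale. The third color, green, is spent sparingly to destroy long progressions whose common difference is incompatible with that scale. Concretely, write $N=\prod_i m_i$ and view $n\in[N]$ through its mixed-radix digits. A progression of length $k$ with difference $d$ either keeps a given block of digits essentially fixed on long stretches, or sweeps through them quickly. In the first case we want the coloring to inherit a good coloring at a lower level. In the second case a quasi-random (Lov\'asz Local Lemma type) choice at that level should kill it.

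The plan is an induction. Let $f(k)$ be the largest $N$ for which there is a $3$-coloring of $[N]$ with no monochromatic $k$-AP that satisfies an extra robustness property. The property I would aim for is that the green class has density at most $\varepsilon$, and that no $k$-AP is monochromatic red or blue even after deleting a small fraction of its terms. The key step is a lemma of the form $\log_2 f(k') \ge \log_2 f(k) + c\,k'$, with $k'$ only a constant factor larger than $k$ (or $k' = k + O(k/\log k)$), under a hypothesis like $f(k)\ge \exp(k)$. Iterating this from a base case given by Erd\H{o}s--Lov\'asz ($w(k;3)\ge 3^{k-1}/4k$) should give $\log_2 f(k) \ge k\cdot(\text{number of iterations})/4$. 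The number of iterations is about $\log^* k$, because each level needs the previous scale to be exponentially large relative to the parameters it controls; that is exactly where the tower, and hence $\log^*$, enters.

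For the inductive step I would take $N' = N\cdot M$ with $M = 2^{ck}$. The color of $x=qN+r$ with $0\le r<N$ is chosen as follows: use the level-$k$ coloring of $r$ when $q$ lies in a random ``active'' set, and an independent random red/blue coloring otherwise, with green inserted on a sparse random set. A progression with difference $d<N$ has long runs inside single blocks, so the robust level-$k$ coloring kills it. A progression with $d\ge N$ hits $k'$ essentially independent random choices, and the local lemma handles these, since $M^2$ times $2^{-k'}$ times the dependency count is small. The delicate case is $d$ slightly below a multiple of $N$: the residues then drift slowly, and the progression visits each block only $O(1)$ times. I would treat this with the digit expansion of $d/N$ and a Diophantine split, namely $\|d/N\|$ small versus large, reusing robustness.

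I expect the main obstacle to be maintaining the robustness hypothesis through the induction with only constant loss in $k$ per level. The green density and the tolerated deletion fraction both degrade. Keeping them bounded over $\log^* k$ levels forces the constant $1/4$, and needs careful bookkeeping of the slow-drift progressions at each level.
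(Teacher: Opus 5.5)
Your overall architecture matches the paper's: about $\log^*k$ product steps, each multiplying the size by $2^{\Theta(k)}$, with tower-type parameter loss. But there is a genuine gap. The inductive step you describe cannot work once the group exceeds roughly $2^{k'}$, which it must if the bound is to be super-exponential, and the key ingredient is missing.

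\textbf{Where the inductive step fails.} Take the $k'$-APs $x,x+N,\dots,x+(k'-1)N$ with $x=qN+r$. They have a single residue $r$, so the robustness of the level-$k$ coloring is irrelevant. Every active block contributes the same color $c_k(r)$, and every inactive block contributes an independent fair red/blue coin. If $c_k(r)$ is red, such a progression is red with probability at least $((1-\varepsilon)/2)^{k'}$. There are about $NM$ of them, and each block's activity affects at least $N$ of them. For a fixed active set these events are independent across residues. So neither a union bound nor the local lemma can certify success once $N\gg(2/(1-\varepsilon))^{k'}$.

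More simply still, an inactive block of length $N\gg k'2^{k'}$ colored by independent coins contains monochromatic $k'$-APs of difference $1$. No part of your case analysis covers these.

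The obstruction is general. If the terms of a progression each take a given color with probability bounded below by a constant, the progression is monochromatic with probability at least $c^{-k'}$. That can never beat the $\ge|G|$ events sharing each random choice. What is needed is that, for every progression and every color, many terms take that color only with probability $\varepsilon\to0$.

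\textbf{The missing ingredient.} The robustness property you propose has the roles reversed (sparse green, robust red/blue). What must be carried through the induction is one color of density $1-\varepsilon$ that itself misses $\delta k$ terms of every $k$-AP.

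\begin{itemize}
\item \emph{Dense robust color.} Lemma~\ref{cyclicdense} supplies exactly this. For $q$ prime in $[k,2.5^k]$ it gives a set in $\mathbb{Z}_q$ of density at most $\varepsilon$ containing a $2^{-800/\varepsilon}$-fraction of every $k$-AP. It is built from random residue classes modulo the primes in $(k/101,k]$, plus a Dirichlet-approximation argument for progressions that wrap around. A random sparse set cannot do this.
\item \emph{Base coloring.} Color that set red/blue by the local lemma and its complement with the third color.
\item \emph{Random shifted product.} In Lemma~\ref{productlemma}, each fiber $\{x\}\times\mathbb{Z}_{q_j}$ of $\mathbb{Z}_{n_{j-1}}\times\mathbb{Z}_{q_j}\cong\mathbb{Z}_{n_j}$ is a random shift of the base coloring, with its dense color renamed $C_{j-1}(x)$.
\item \emph{Progressions inside a fiber.} This is your $d\equiv 0 \pmod N$ case, and it is handled deterministically.
\item \emph{Progressions with $d_1\neq0$.} Such a progression has at least $\delta_{j-1}k$ terms, in distinct fibers, whose projections avoid a given color in $C_{j-1}$. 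Each such term takes that color with probability at most $\varepsilon_j$. The union bound then succeeds because $\varepsilon_j^{-2\delta_{j-1}k/3}>3\cdot 2^k n_j^2$.
\end{itemize}

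Working in cyclic groups of coprime orders with no prime factor below $k$ also removes your ``slow drift'' case from the inductive step. The analogous wrap-around issue is dealt with once, in the base lemma.

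\textbf{Parameter bookkeeping.} In the paper, $k$ stays fixed throughout. The price is the degradation $\varepsilon_j=2^{-10j/\delta_{j-1}}$, $\delta_j=2^{-800/\varepsilon_j}$. The hypothesis $k\ge T_{3b}(\varepsilon^{-1})$ then caps the number of levels at $b\approx(\log^*k)/3.5$, each gaining a factor $2^{0.9k}$. By contrast, your recursion with $k'=Ck$ gives only a geometric series, and hence $\log_2 f(k)=O(k)$.
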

 
An important ingredient in the proof of Theorem~\ref{super-exponential} is Lemma~\ref{cyclicdense}. It gives a novel probabilistic construction of a very dense subset of a large cyclic group that is far from containing a $k$-term arithmetic progression. One can then turn this into a $3$-coloring (using this set as one color class and the other elements are colored uniformly at random with the other two colors) that is far from containing a monochromatic $k$-term arithmetic progression and has a very dense color. This is an essential input for the random shifted product construction (Lemma~\ref{productlemma}) that allows one to get a larger construction (of a $3$-coloring of a cyclic group with desirable properties) from two smaller ones. In each application, the construction increases in size by a factor which is exponential in $k$ at the expense of a double exponential loss in a density parameter. Consequently, we can iterate this for roughly $(\log^* k)/2$ steps, leading to the bound in Theorem \ref{super-exponential}. While an early version of a random shifted product construction is key to the previous improvement in \cite{Hunter}, the new versions we utilize here are more efficient and allow for keeping the number of colors fixed through the iteration. 

It is also interesting to better understand van der Waerden numbers for more colors. The following theorem gives a lower bound on van der Waerden numbers when the number of colors is at least a large power of the logarithm of the forbidden monochromatic arithmetic progression length.

\begin{thm}\label{powervdw}
For each $\varepsilon\in (0,1)$ there exists $k_0(\varepsilon)$ so that the following holds. Suppose $k\ge k_0(\varepsilon)$ and $r\ge (\log k)^{3/\varepsilon}$, then $w(k;r) \geq r^{(1-\varepsilon) k(\log k)}$.
\end{thm}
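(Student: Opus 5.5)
Theorem~\ref{powervdw} follows the same random-shifted-product strategy used for Theorem~\ref{super-exponential}. With many colors available, one shifted-product step raises the size by a factor $r^{(1-o(1))k}$, and we iterate about $\log k$ times. The target $r^{(1-\varepsilon)k\log k}$ says that each color class of a good coloring of $\mathbb{Z}_N$ should have density close to $1/r$. It should also be ``far'' from containing a $k$-AP, meaning a random dilate or shift of any fixed $k$-AP is monochromatic with probability at most about $r^{-(1-\varepsilon)k}$ per scale.

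\textbf{Step 1 (base case).} I would take a random $r$-coloring of a cyclic group $\mathbb{Z}_{N_0}$ with $N_0 \approx r^{(1-\varepsilon/2)k}$ and delete or recolor monochromatic $k$-APs via a local-lemma or alteration argument. This gives a coloring with no monochromatic $k$-AP in which every color class stays well-distributed. Here well-distributed means that for every sub-progression structure relevant later, the number of $j$-term monochromatic APs is close to the random count $N_0^2 r^{-j+1}$ for $j$ up to about $k/\log k$.

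\textbf{Step 2 (shifted product).} Given good colorings $\chi_1$ of $\mathbb{Z}_{N_1}$ and $\chi_2$ of $\mathbb{Z}_{N_2}$ with $\gcd(N_1,N_2)=1$, I would color $\mathbb{Z}_{N_1N_2}\cong \mathbb{Z}_{N_1}\times\mathbb{Z}_{N_2}$ by $\chi(x,y)=\pi_y(\chi_1(x+s_y))$. Here $s_y$ is a random shift and $\pi_y$ a random permutation of the $r$ colors, both depending on the $\mathbb{Z}_{N_2}$-coordinate, in the spirit of Lemma~\ref{productlemma}. A $k$-AP in the product projects to an AP in $\mathbb{Z}_{N_2}$. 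If the projection is nonconstant, its terms meet many distinct $y$-values, and the independent permutations make the AP monochromatic with probability roughly $r^{-(\#\text{distinct }y)+1}$ times the density of monochromatic sub-progressions in $\chi_1$. If the projection is constant, the AP lives in a fibre and is handled by $\chi_1$ itself. A first-moment computation over the roughly $(N_1N_2)^2$ APs should then show that some choice of shifts and permutations yields size $N_1N_2$ with only a small loss in the quality parameter.

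\textbf{Step 3 (iterate).} Iterating Step 2 about $(1-\varepsilon/2)\log k$ times gives the final bound, since the exponent accumulates a factor $k$ per step. The hypothesis $r\ge(\log k)^{3/\varepsilon}$ is what guarantees that the polylogarithmic losses per step, which come from the number of ways an AP can split across the levels, are absorbed into $r^{\varepsilon k}$.

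\textbf{Main obstacle.} The hardest part is the bookkeeping in Step 2. A $k$-AP of the product can wrap around $\mathbb{Z}_{N_2}$, so the fibres it meets are visited by sub-APs of lengths that vary and may be short. The induction hypothesis therefore has to control monochromatic APs of all lengths $j\le k$, with a bound like $r^{-(1-\varepsilon)(j-1)}$ times a power-of-$\log$ slack, rather than only $k$-APs. My first attempt would be to state the inductive invariant as exactly this uniform count bound for all $j$, and check that the product step multiplies the slack by at most a factor $(\log k)^{O(1)}$.
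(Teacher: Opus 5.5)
Your Step~2 has no mechanism for beating the random bound, so the iteration gains nothing. Because $\pi_y$ is a uniformly random permutation chosen independently for each $y$, the color $\chi(x,y)$ is uniform on $[r]$. Points in distinct fibers therefore get independent colors, whatever $\chi_1$ and the shifts $s_y$ are. Consider a $k$-AP whose second-coordinate difference $d_2$ has order at least $k$ in $\mathbb{Z}_{N_2}$. This holds for every $d_2\neq 0$ if $N_2$ has no prime factor below $k$, and for all but $O(k^2)$ values of $d_2$ in general. Such a progression meets $k$ distinct fibers in one point each, so it is monochromatic with probability exactly $r^{1-k}$, and essentially all $N_1^2N_2^2$ progressions are of this kind. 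The first moment then forces $N_1N_2<r^{(k-1)/2}$. Even the local lemma, where each fiber is involved in about $kN_1^2N_2$ such events, only allows $N_1^2N_2\lesssim r^{k-1}/k^2$, which is no better than the Erd\H{o}s--Lov\'asz bound you start from. Your inductive invariant on $\chi_1$ never enters. The wrap-around issue you single out is a red herring: the fatal case is the generic one, where each fiber receives a single term. As a sanity check, nothing in your scheme needs $r$ to be large, so it would give $w(k;2)\ge 2^{(1-\varepsilon)k\log k}$. That would settle Erd\H{o}s's two-color problem in a far stronger form than Theorem~\ref{super-exponential} achieves even for three colors. Likewise, absorbing $(\log k)^{O(1)}$ losses per step into $r^{\varepsilon k}$ works for every $r\ge 2$, so it cannot explain the hypothesis $r\ge(\log k)^{3/\varepsilon}$.

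What is missing is a way to push the monochromatic probability far below $r^{-k}$. That requires very unbalanced fiber colorings whose dense colors are assigned deterministically, not by random permutations. The paper does this in Theorem~\ref{masterpower}, which yields Theorem~\ref{powervdw} via Corollary~\ref{lastcor}. At step $i$, the current coloring of $\mathbb{Z}_{n_{i-1}}$ receives $2s$ fresh colors whose classes are random translates of densest $k$-AP-free sets (Lemma~\ref{addscolors general}). The old colors then have density only $\varepsilon_i\le(1-\delta_k(N))^s$. The new factor $\mathbb{Z}_{p_i}$, with $p_i\approx(2m)^{k/3}$, carries a local-lemma set-coloring $c_1:\mathbb{Z}_{p_i}\to\binom{[r_i]}{s}$ (Corollary~\ref{lllcor}) in which each color is absent from at least $0.1k$ terms of every $k$-AP. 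Lemma~\ref{productlemma4} lets $c_1(x)$ decide which $s$ colors are dense on the fiber over $x$; only the shifts are random. Take a $k$-AP not contained in a fiber and a color $j$. At least $0.1k$ of its terms lie in fibers where $j$ has density at most $\varepsilon_i$, so it is monochromatic in $j$ with probability at most $\varepsilon_i^{0.1k}$. This negative correlation between terms, not control of short monochromatic sub-progressions, is what lets the size multiply by $p_i$ at each step. The cost is $2s$ new colors per step, so there are $m=\lceil 6r^{2/k}\log r\log k\rceil$ steps with $4ms\le r$. The remaining requirement is $(1-\delta_k(N))^{-sk/100}>N$, where $s\approx r/(4m)$ and $\delta_k(N)\approx r^{-(1-c)}$ for $N=r^{(1-c)k\log k}$ by the Erd\H{o}s--Tur\'an base-$p$ construction with $p\approx k$. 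This amounts roughly to $r^{c}\gtrsim(\log k\log r)^2$, which is exactly where $r\ge(\log k)^{3/\varepsilon}$ is used. None of these ingredients (very dense AP-free sets, set-colorings, fresh colors per step) appears in your plan, and without them the product step cannot gain.
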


\noindent Observe that $w(k;r)\ge N$ implies (by averaging) the existence of a color class $S\subset [N]$ with size at least $N/r$ and which lacks $k$-term arithmetic progressions. Thus, lower bounds on  van der Waerden numbers yield lower bounds for Szemer\'edi's theorem. In the other direction, if there is a set $S\subset [N]$ with $|S| \geq \varepsilon N$ with no $k$-term arithmetic progression, then we can use $r=(3/\varepsilon)\log_2 N$ random translates of $S$ to cover $[N]$ with positive probability, yielding $w(k;r) > N$. As noted in \cite{BLR}, it thus follows from the construction of Rankin \cite{Rankin} (which extends the case $k=3$ due to Behrend \cite{Behrend}, see also \cite{EHPS}), that there is $c_k>0$ such that $w(k;r) \geq 2^{c_k(\log r)^{\lceil \log k \rceil}}$. This is better than Theorem \ref{powervdw} if $r$ is {\it very large} with respect to $k$. Together with the Kelley-Meka theorem \cite{KM}, we have $w(3;r) = 2^{(\log r)^{\Theta(1)}}$.

However, when $r$ is not too large with respect to $k$ (i.e., when $r\le k^{O(1)}$), the authors are not aware of any way to construct a set $S\subset [r^{(1+\Omega(1))k\log k}]$ with density $1/r$ which does not contain a $k$-term arithmetic progression. This serves as a bottleneck for improving the exponent in Theorem~\ref{powervdw} by a factor of $1+\Omega(1)$. Moreover, any improvement to this bottleneck would immediately improve the lower bound above, by plugging the improved parameters into Theorem~\ref{masterpower}. 

\subsection{Canonical van der Waerden numbers} 

The {\it canonical van der Waerden number} $H(k)$ is the minimum $N$ such that every coloring of $[N]$ contains a $k$-term arithmetic progression which is monochromatic or rainbow (all different colors). The fact that these numbers exist was proved by Erd\H{o}s and Graham \cite{erdosgraham} through an application of Szemer\'edi's theorem. The  simple lower bound $H(k) \geq w(k;k-1)$ holds as any coloring with fewer than $k$ colors cannot contain a rainbow $k$-term arithmetic progression. Substituting in the lower bound which follows from the Lov\'asz local lemma, one gets $H(k) \geq \Omega(k^{k-2})$. Erd\H{o}s and Graham \cite{erdosgraham} posed as an open problem to determine if $H(k)^{1/k}/k \to \infty$ as $k \to \infty$ (see also \cite{bloom176}). 

The following theorem, which is a consequence of a better lower bound on $w(k;k-1)$, resolves the Erd\H{o}s-Graham problem. 

\begin{thm}\label{canonicallower}
We have $H(k) \geq k^{(1-o(1))k\log k}$.     
\end{thm}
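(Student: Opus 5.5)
Theorem~\ref{canonicallower} follows from Theorem~\ref{powervdw} via the trivial bound $H(k)\ge w(k;k-1)$. Recall why that bound holds: a coloring of $[N]$ with $k-1$ colors cannot contain a rainbow $k$-term progression. So any $(k-1)$-coloring of $[N]$ with no monochromatic $k$-AP certifies $H(k)>N$. It therefore suffices to show $w(k;k-1)\ge k^{(1-o(1))k\log k}$.

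Fix $\varepsilon\in(0,1)$ and set $r=k-1$. The hypothesis of Theorem~\ref{powervdw} requires $r\ge(\log k)^{3/\varepsilon}$. This holds once $k$ is large in terms of $\varepsilon$, because $(\log k)^{3/\varepsilon}=k^{o(1)}$ for fixed $\varepsilon$. Taking also $k\ge k_0(\varepsilon)$, Theorem~\ref{powervdw} gives
\[
w(k;k-1)\ge (k-1)^{(1-\varepsilon)k\log k}.
\]
We then convert the base to $k$: $(k-1)^{k\log k}=k^{k\log k}\,(1-1/k)^{k\log k}\ge k^{k\log k}e^{-2\log k}$, and the factor $e^{-2\log k}=k^{-O(1)}$ is absorbed into the exponent as $k^{-o(k\log k)}$. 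Hence $H(k)\ge k^{(1-2\varepsilon)k\log k}$ for all $k\ge k_1(\varepsilon)$. Since $\varepsilon$ is arbitrary, a standard diagonalization (letting $\varepsilon\to0$ slowly as $k\to\infty$) yields $H(k)\ge k^{(1-o(1))k\log k}$.

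No step here is a real obstacle; all the substance lies in Theorem~\ref{powervdw}, which we take as given. The only point needing care is that the range $r\ge(\log k)^{3/\varepsilon}$ includes $r=k-1$ for every fixed $\varepsilon$. This holds because the threshold is polylogarithmic in $k$. The conclusion resolves the Erd\H{o}s--Graham problem, since $H(k)^{1/k}/k\ge k^{(1-o(1))\log k-1}\to\infty$.
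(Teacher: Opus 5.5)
Your proposal is correct and follows the paper's route: $H(k)\ge w(k;k-1)$, since no rainbow $k$-AP is possible with $k-1$ colors, combined with Theorem~\ref{powervdw} at $r=k-1$, which is admissible because $(\log k)^{3/\varepsilon}\le k-1$ for large $k$. Your change of base from $k-1$ to $k$ costs only a factor $k^{-O(1)}$, which is absorbed into the $o(1)$ in the exponent, so no further argument is needed.
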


\noindent We note that in very recent independent work, Bae \cite{Bae} proved $w(k;k-1)^{1/k}\ge (1-o(1))\frac{k^2}{e\log k}$ and consequently $H(k)\ge k^{(2-o(1))k}$, which also resolves the problem from \cite{erdosgraham} (despite getting a weaker bound compared to Theorem~\ref{canonicallower}). This was done by combining the well-known local lemma bound from \cite{erdos lovasz} and a recurrence proved in \cite{BCT}.

It is worth pointing out that we can already quickly resolve the Erd\H{o}s-Graham problem and verify that $H(k)=k^{\omega(k)}$ from Theorem \ref{super-exponential} (or from the construction in \cite{Hunter}) and a simple product-coloring (for details, see the discussion in Section \ref{section:conclude}). However, the lower bound one gets on $H(k)$ using this is significantly weaker than that given by Theorem \ref{canonicallower}.

\subsection{Further applications and future directions}

We note that parts of this paper are written in a higher level of generality than is strictly necessary to prove Theorems~\ref{super-exponential} and \ref{powervdw}. This is because we believe a number of these ideas to be of independent interest. One application we already mentioned is to canonical van der Waerden numbers. In a forthcoming companion note \cite{FH}, we shall present several further applications of the constructions presented here to obtain new bounds for different arithmetic Ramsey numbers. 

We suspect that the techniques developed here should be implementable to obtain new lower bound constructions for small van der Waerden numbers with at least three colors. See the Wikipedia page \cite{wikivdw} for current known lower bounds.

\noindent {\bf Organization.} In the next section, we present constructions of very dense sets without $k$-term arithmetic progressions in both the interval and cyclic setting. In Section \ref{sectionlll}, we present a simple consequence of the Lov\'asz local lemma that shows there are set colorings which are far from containing monochromatic $k$-term arithmetic progressions. In Section \ref{sectionrspc}, we provide several different variants of our random shifted product construction. In Section \ref{section:vdw}, we combine the various tools from the previous sections to obtain Theorems \ref{super-exponential} and \ref{powervdw} giving lower bounds on van der Waerden numbers. 

All logarithms are base $e$ unless otherwise specified. We also sometimes omit floor and ceiling signs when this has no effect on the argument. 

\noindent {\bf Disclosure.} The authors used Chat GPT Pro~5.5 for copyediting assistance during the final edits of this paper. All mathematical arguments and results in this paper were human-deduced.

\section{Very dense sets without long arithmetic progressions}\label{sec:dense}

Erd\H{o}s and Tur\'an \cite{ErTu} gave the following simple construction of a set of nonnegative integers with no long arithmetic progressions. For a prime $p$, let $T_p$ denote the set of nonnegative integers which, when written in base $p$, has no digit equal to $p-1$. The set $T_p$ has no $p$-term arithmetic progression. Indeed, if otherwise, letting $a,a+d,\ldots,a+(p-1)d$ be a $p$-term arithmetic progression with elements in $T_p$, consider the position $i$ of the least nonzero digit in base $p$ of the common difference $d$. As we go through the elements of the arithmetic progression, we get every element of $\{0,1,\ldots,p-1\}$ appearing exactly once in position $i$ in base $p$. One can also check that $T_p$ is precisely the greedy construction of a set of nonnegative integers which has no $p$-term arithmetic progression, so it is natural to suspect that it is rather dense (or at least at the beginning). Note that $T_p$ contains exactly $(p-1)^t$ elements from $\{0,1,\ldots,p^t-1\}$, which is the set of nonnegative integers with at most $t$ digits when written in base $p$. In particular, if $N=p^t$, we have $|T_p \cap \{0,1,\ldots,N-1\}|/N= (1-1/p)^t$. More generally, it follows from a simple induction argument on $t$ that if a positive integer $N$ is such that $N-1$ has at most $t$ digits when written in base $p$, then $|T_p \cap \{0,1,\ldots,N-1\}|/N \geq (1-1/p)^t$. 

The function $r_k(N)$ is the maximum size of a subset of $[N]$ which contains no $k$-term arithmetic progression. Szemer\'edi's theorem \cite{Szemeredi} states that for every fixed $k$, we have $r_k(N)=o(N)$. From the above discussion, we have the following corollary. 

\begin{cor}\label{szemcor}
If $p$ is prime, $t$ is a positive integer, and $N \leq p^t$, then $r_p(N) \geq (1-1/p)^t N$. 
\end{cor}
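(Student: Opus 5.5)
We take $S=\{x+1 : x\in T_p,\ 0\le x\le N-1\}\subset[N]$, the shift by one of the initial segment of the Erd\H{o}s--Tur\'an set $T_p$. Since translation preserves arithmetic progressions, and $T_p$ has no $p$-term arithmetic progression (as argued above), $S$ has none either. Hence $r_p(N)\ge |S| = |T_p\cap\{0,1,\ldots,N-1\}|$, and it remains to show that this count is at least $(1-1/p)^t N$.

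The hypothesis $N\le p^t$ gives $N-1\le p^t-1$, so $N-1$ has at most $t$ digits in base $p$. The density statement above then applies directly. For completeness, we prove by induction on $t$ that
\[
|T_p\cap\{0,\ldots,N-1\}|\ge (1-1/p)^t N \quad \text{for all } 1\le N\le p^t.
\]
For $t=0$ we have $N=1$, and $0\in T_p$, so the bound holds.

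For the inductive step, write $N=qp^{t-1}+s$ with $0\le q\le p$ and $0\le s<p^{t-1}$; when $N=p^t$ take $q=p$ and $s=0$. The integers in $[0,N)$ split into $q$ full blocks $[jp^{t-1},(j+1)p^{t-1})$ for $j<q$, followed by a partial block of length $s$ with leading digit $q$.
\begin{itemize}
\item Each full block with $j\le p-2$ contains exactly $(p-1)^{t-1}$ elements of $T_p$. If $q=p$, the block $j=p-1$ contributes nothing.
\item The partial block occurs only when $q\le p-1$. If $q\le p-2$, it contains at least $(1-1/p)^{t-1}s$ elements of $T_p$ by the induction hypothesis applied to its lower $t-1$ digits. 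If $q=p-1$, it contributes nothing.
\end{itemize}

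The only point needing care is checking the total against $(1-1/p)^t N$ in each case. Set $\beta=(1-1/p)^{t-1}$.
\begin{itemize}
\item If $q=p$, the count is $(p-1)\beta p^{t-1}=(1-1/p)\beta\, p^t$, which is exactly $(1-1/p)^t N$.
\item If $q=p-1$, the count is $(p-1)\beta p^{t-1}$. Since $N< p^t$, this is at least $(1-1/p)\beta N$.
\item If $q\le p-2$, the count is at least $\beta(qp^{t-1}+s)=\beta N$, which is at least $(1-1/p)\beta N$.
\end{itemize}
This completes the induction and the proof.
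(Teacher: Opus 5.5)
Your proposal is correct and takes essentially the same approach as the paper: restrict the Erd\H{o}s--Tur\'an set $T_p$ to $\{0,1,\ldots,N-1\}$ (shifted into $[N]$) and bound its size by induction on the number of base-$p$ digits. The paper only asserts this ``simple induction argument on $t$'', and your leading-digit case analysis ($q=p$, $q=p-1$, $q\le p-2$) supplies it correctly.
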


If $k=\Theta(\log N)$ we get $r_k(N)\ge (1-\Theta(1/\log N))N$ from Corollary \ref{szemcor} and Bertrand's postulate. In particular, in this case there are subsets of $[N]$ of density $1-o(1)$ that do not contain a $k$-term arithmetic progression. Such sets are a useful starting point for randomized shifted product constructions. However, for technical reasons, it is much more convenient to transition from the setting of discrete intervals to cyclic groups, so let us introduce some more notation. 

All groups we consider here are abelian. In an abelian group, a $k$-term arithmetic progression ($k$-AP for short) is a sequence $a,a+d,a+2d,\ldots,a+(k-1)d$ with $d$ nonzero. We say a subset of the group is \textit{$k$-AP-free} if it does not contain a $k$-AP. The following is an analogue of the Erd\H{o}s-Tur\'an construction in certain cyclic groups.

\begin{lem}
Consider primes $p_1,\ldots,p_s \leq k$ (not necessarily distinct). Let $N=\prod_{i=1}^s p_i$ and $m= \min_{i \in [s]} p_i$. There is a subset $S \subset \mathbb{Z}_N$ of size $|S|=\prod_{i=1}^s (p_i-1) \geq (1-1/m)^s N$ that is $k$-AP-free. 
\end{lem}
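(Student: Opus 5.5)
The plan is to mimic the Erd\H{o}s--Tur\'an construction using a mixed-radix digit expansion in place of base $p$. Set $P_0=1$ and $P_j=\prod_{i=1}^j p_i$, so $P_s=N$. Every residue $x\in\mathbb{Z}_N$ has a unique representative in $\{0,\dots,N-1\}$. That representative has a unique expansion $x=\sum_{i=1}^s x_iP_{i-1}$ with digits $x_i\in\{0,1,\dots,p_i-1\}$.

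Define $S$ to be the set of $x$ with $x_i\neq p_i-1$ for every $i$. Uniqueness of the expansion gives $|S|=\prod_{i=1}^s(p_i-1)$. Since $p_i\ge m$ for each $i$,
\[
\prod_{i=1}^s(p_i-1)=N\prod_{i=1}^s(1-1/p_i)\ge (1-1/m)^sN.
\]

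It remains to show that $S$ is $k$-AP-free. Take a $k$-AP $a,a+d,\dots,a+(k-1)d$ in $\mathbb{Z}_N$ with $d\neq 0$. Write $d$ in the same mixed-radix form, and let $j$ be the least index with $d_j\neq 0$; such a $j$ exists because $d\neq 0$. Then $P_{j-1}\mid d$, say $d\equiv P_{j-1}e \pmod N$, and $e\equiv d_j\not\equiv 0\pmod{p_j}$.

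The key observation is how the $j$-th digit behaves along the progression. Since $P_j\mid N$, the $j$-th digit of any $y\in\mathbb{Z}_N$ is well defined from $y \bmod P_j$: it is $\lfloor (y\bmod P_j)/P_{j-1}\rfloor$. Adding $td$ leaves $y\bmod P_{j-1}$ unchanged, because $P_{j-1}\mid d$. Hence the $j$-th digit of $a+td$ equals $(a_j+td_j)\bmod p_j$, and there are no carries from lower digits. This is exactly the step that needs care: I would verify it by computing $a+td$ modulo $P_j$ directly, which also shows that wrap-around modulo $N$ causes no trouble.

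Now $p_j$ is prime and $d_j\not\equiv 0\pmod{p_j}$. So as $t$ runs over $0,1,\dots,p_j-1$, the value $(a_j+td_j)\bmod p_j$ runs over all residues mod $p_j$, and in particular hits $p_j-1$. Since $p_j\le k$, these values of $t$ all lie in $\{0,\dots,k-1\}$. Thus some term of the progression has $j$-th digit $p_j-1$, so it is not in $S$, and $S$ contains no $k$-AP.
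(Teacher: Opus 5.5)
Your proof is correct, and it reaches the result by a somewhat different route than the paper.

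The paper first uses the Chinese remainder theorem to split $\mathbb{Z}_N$ into cyclic factors of prime-power order. In each factor it takes the base-$p$ set with no digit equal to $p-1$, appealing to ``the same argument as in the integer case.'' It then takes the product of these sets. This implicitly uses that a product of $k$-AP-free sets is $k$-AP-free: project onto a coordinate where the common difference is nonzero.

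You instead use a single mixed-radix expansion with radices $p_1,\dots,p_s$ in any order. This handles repeated and distinct primes uniformly and needs neither the CRT step nor the product step. What it requires instead is your digit computation modulo $P_j$. You carry that out correctly, and it also settles the wrap-around modulo $N$, which the paper's appeal to the integer case leaves to the reader. Primality of $p_j$ is used exactly once, to make $d_j$ invertible modulo $p_j$.

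For distinct primes your set generally differs from the paper's. For $N=6$, for instance, you get $\{0,2\}$, while the paper's CRT product gives $\{0,4\}$. Both sets have size $\prod_i(p_i-1)$ and are $k$-AP-free by the same least-nonzero-digit principle.
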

\begin{proof}
We can write $\mathbb{Z}_N$ as a product of cyclic groups, each of prime power order with distinct primes, and these primes are each at most $k$. Note that it suffices to prove the lemma in the case $N$ is such a prime power, as we can get the desired set $S$ in the general case by taking the product of the sets we get in the prime power case. So suppose $N=p^s$ with $p$ prime, $p \leq k$, and $s$ a positive integer. We view the elements of $\mathbb{Z}_N$ as $\{0,1\ldots,N-1\}$, and let $S \subset \mathbb{Z}_N$ consist of those elements that have no digit equal to $p-1$ when written in base $p$. The same argument as in the integer case gives that $S$ has no $p$-term arithmetic progression. As $p \leq k$, $S$ also has no $k$-term arithmetic progression. In this case, the size of $S$ is $(1-1/p)^s N \ge (1-1/m)^s N$. 
\end{proof}

Let $\delta_k(N):= \min_{M\le N}\{\frac{r_k(M)}{M}\}$. The reason for this choice is to make $\delta_k(N)$ monotone decreasing. Indeed, note that $r_k(N)/N$ is not monotone, as $\frac{r_k(2k-1)}{2k-1}= 1-1/(2k-1)>1-1/k=\frac{r_k(k)}{k}$. 

By Bertrand's postulate, for every integer $N \geq 2$, there is a prime $p$ satisfying $N/2 \leq p \leq N$. From this and  Corollary \ref{szemcor}, we have the following corollary. 

\begin{cor}\label{generaldeltakbound}
For all integers $k,t \geq 2$ and $N  \leq (k/2)^t$, we have $\delta_k(N) \geq (1-2/k)^t$. 
\end{cor}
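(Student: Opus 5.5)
Since $\delta_k(N)=\min_{M\le N} r_k(M)/M$ and $\delta_k$ is monotone, it suffices to show that every $M\le N\le (k/2)^t$ satisfies $r_k(M)\ge (1-2/k)^t M$. We will obtain this from Corollary~\ref{szemcor}, using a prime $p$ that lies between $k/2$ and $k$.

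\textbf{Choosing the prime.} Since $k\ge 2$, Bertrand's postulate in the form stated above (applied to the integer $k$) gives a prime $p$ with $k/2\le p\le k$.

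\textbf{Applying Corollary~\ref{szemcor}.} Fix $M\le N$. Then $M\le (k/2)^t\le p^t$, so Corollary~\ref{szemcor} applies and gives
\[
r_p(M)\ge (1-1/p)^t M .
\]
Because $p\le k$, any set with no $p$-term arithmetic progression also has no $k$-term arithmetic progression. Hence $r_k(M)\ge r_p(M)$.

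\textbf{Comparing the densities.} From $p\ge k/2$ we get $1/p\le 2/k$, so $1-1/p\ge 1-2/k$. When $k\ge 2$ the right-hand side is nonnegative, so raising both sides to the power $t$ preserves the inequality: $(1-1/p)^t\ge(1-2/k)^t$. Therefore
\[
\frac{r_k(M)}{M}\ge (1-2/k)^t \quad\text{for every } M\le N,
\]
and taking the minimum over $M\le N$ yields $\delta_k(N)\ge (1-2/k)^t$.

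\textbf{Main obstacle.} The only point needing care is the edge case $k=2$. There $1-2/k=0$, so the inequality holds trivially, and the prime $p=2$ still lies in $[k/2,k]=[1,2]$. Apart from this the argument is routine.
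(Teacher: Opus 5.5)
Your proposal is correct and follows the same route as the paper: Bertrand's postulate gives a prime $p\in[k/2,k]$, Corollary~\ref{szemcor} applies to each $M\le N\le p^t$, and you finish with $r_k(M)\ge r_p(M)$ and $1-1/p\ge 1-2/k$. The appeal to monotonicity of $\delta_k$ is unnecessary, since proving the bound for every $M\le N$ already controls the minimum that defines $\delta_k(N)$.
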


For all $N$, we get two $k$-AP-free sets that together cover most of the elements $\mathbb{Z}_N$.

\begin{lem}\label{lemmatwosets0}
Let $k$ and $N$ be positive integers. There is a pair of disjoint $k$-AP-free subsets $S_1,S_2 \subset \mathbb{Z}_N$ such that $S_1 \cup S_2$ covers a density at least $\delta_k(N)$ of the group. 
\end{lem}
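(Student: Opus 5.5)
Identify $\mathbb{Z}_N$ with $\{0,1,\ldots,N-1\}$ and split it into two halves: $I_1=\{0,\ldots,\lceil N/2\rceil-1\}$ and $I_2=\{\lceil N/2\rceil,\ldots,N-1\}$. Each half is an interval of length $M_i\le N$. By the definition of $\delta_k(N)$ as a minimum over $M\le N$, each $I_i$ contains a subset $T_i$ with $|T_i|=r_k(M_i)\ge \delta_k(N)M_i$ and with no $k$-term arithmetic progression of integers. Take $S_i=T_i$ viewed inside $\mathbb{Z}_N$. The sets are disjoint and $|S_1\cup S_2|\ge \delta_k(N)(M_1+M_2)=\delta_k(N)N$, which gives the density claim.

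The issue is that a set AP-free in the integers need not be AP-free in $\mathbb{Z}_N$, because of wraparound. So the key step is the following claim: a subset $T$ of an interval of length at most about $N/2$, if it contains a $k$-AP of $\mathbb{Z}_N$ (with $d\neq0$), also contains a genuine integer $k$-AP.

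To prove the claim, let $x_j=a+jd$ in $\mathbb{Z}_N$. Choose the representative $d'\in(-N/2,N/2]$ of $d$. Consecutive terms $x_j,x_{j+1}$ both lie in an interval $I$ of length $<N/2$ (roughly). Their integer difference therefore lies in $(-N/2,N/2)$ and is congruent to $d$ mod $N$, so it must equal $d'$ exactly. The boundary case $d'=N/2$ cannot occur, since the integer difference has absolute value less than $N/2$. Hence $x_0,x_0+d',\ldots,x_0+(k-1)d'$ is an integer AP with nonzero difference inside $T$, a contradiction. For $k\le 2$ the statement is trivial or degenerate, so assume $k\ge3$.

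The main obstacle is the parity and boundary bookkeeping: we need each half to have diameter strictly less than $N/2$. The intervals have $\max-\min\le \lceil N/2\rceil-1$. For $N$ even this is $N/2-1<N/2$. For $N$ odd it is $(N-1)/2<N/2$. So both cases work, and no two elements of a half differ by $N/2$ or more. I would write this verification out carefully; the rest is immediate.
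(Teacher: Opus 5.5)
Your proof is correct and is essentially the paper's argument. Both split $\mathbb{Z}_N$ into the two integer intervals of lengths $\lceil N/2\rceil$ and $\lfloor N/2\rfloor$, take a densest integer-$k$-AP-free set in each, and use that each half has diameter less than $N/2$, so no $k$-AP of $\mathbb{Z}_N$ inside a half can wrap around. Your explicit handling of the representative $d'\in(-N/2,N/2]$, including ruling out $d'=N/2$, is a more careful version of the paper's one-line remark that the common difference may be taken to be a positive integer at most $N/2$.
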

\begin{proof}
Let $N_1 = \lceil N/2 \rceil$. Let $S_1 \subset \{0,1,\ldots,N_1-1\}$ be a densest $k$-AP-free set, so $|S_1| \geq \delta_k(N)N_1$. Let $S_2 \subset \{N_1,\ldots,N-1\}$ be a densest $k$-AP-free set, so $|S_2| \geq \delta_k(N)(N-N_1)$. Observe that $S_1,S_2$ are disjoint and $|S_1 \cup S_2| \geq \delta_k(N)N$. After reversing the progression if necessary, since any arithmetic progression in $\mathbb{Z}_N$ has common difference with a representative that is a positive integer at most $N/2$, $S_i$ for $i=1,2$ is also $k$-AP-free when viewed as a subset of $\mathbb{Z}_N$.
\end{proof}
\begin{rmk} In a pedantic sense, Lemma~\ref{lemmatwosets0} can be close to sharp when $\delta_k(N)$ is close to $1$ (simply because for even $N$, any $S\subset \mathbb{Z}_N$ of size $>N/2$ contains a coset of $\{0,N/2\}$ by averaging, which is technically a $k$-AP). We highlight this to indicate that complications can arise when trying to construct very dense progression-free subsets of cyclic groups. In Subsection~\ref{subsect:cyclicfrom}, we will give an argument which allows us to handle arithmetic progressions that wrap around.\end{rmk}

\begin{cor}\label{coruniondense0} 
Let $k,s,N$ be positive integers. There are $2s$ subsets of $\mathbb{Z}_N$ that are each $k$-AP-free and cover all but a fraction at most $(1-\delta_k(N))^{s}$ of $\mathbb{Z}_N$. 
\end{cor}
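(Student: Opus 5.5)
The plan is to apply Lemma~\ref{lemmatwosets0} repeatedly, each time to a random translate, and show that $s$ rounds leave uncovered a fraction at most $(1-\delta_k(N))^s$ of $\mathbb{Z}_N$. Two facts make this work. First, translates of $k$-AP-free sets are $k$-AP-free, because translation maps a $k$-AP to a $k$-AP with the same common difference. Second, by linearity of expectation, a uniformly random translate of a set of density $\delta$ covers each fixed element with probability exactly $\delta$.

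Let $S_1,S_2$ be the disjoint $k$-AP-free sets given by Lemma~\ref{lemmatwosets0}, and set $T=S_1\cup S_2$. Then $|T|\ge \delta_k(N)N$. Choose $x_1,\dots,x_s\in\mathbb{Z}_N$ independently and uniformly at random, and take the $2s$ sets $S_1+x_j$ and $S_2+x_j$ for $j\in[s]$. Each of these is $k$-AP-free.

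Now fix an element $y\in\mathbb{Z}_N$. It fails to be covered exactly when $y\notin T+x_j$ for every $j$. These events are independent, and each has probability $1-|T|/N\le 1-\delta_k(N)$. So the probability that $y$ is uncovered is at most $(1-\delta_k(N))^s$. Summing over $y$, the expected number of uncovered elements is at most $(1-\delta_k(N))^sN$. Hence some choice of the shifts $x_1,\dots,x_s$ achieves at most this many uncovered elements, which gives the corollary.

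Deterministically, one could instead choose each shift greedily to cover at least a $\delta_k(N)$-fraction of what is still uncovered, which is possible by averaging over all shifts. This gives the same bound by induction on $s$.

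There is no serious obstacle here. The only points to check are that translation preserves being $k$-AP-free in $\mathbb{Z}_N$, and that the union $T$ has density at least $\delta_k(N)$, which Lemma~\ref{lemmatwosets0} provides directly.
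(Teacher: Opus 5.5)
Your proposal is correct and follows the paper's proof: take $S_1,S_2$ from Lemma~\ref{lemmatwosets0}, use $s$ independent uniformly random translates of $S_1\cup S_2$, note that each element is missed with probability at most $(1-\delta_k(N))^s$, and fix a choice of shifts attaining at most the expected uncovered density. One small point of wording: the per-element probability $|T|/N$ comes from a direct count ($y\in T+x$ iff $x\in y-T$), not from linearity of expectation, which is only needed for the later sum over $y$.
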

\begin{proof}
Let $\rho:=(1-\delta_k(N))^{s}$. Let $S_1,S_2$ be the subsets of $\mathbb{Z}_N$ guaranteed by Lemma~\ref{lemmatwosets0}. Consider $s$ independent random translates of $S_1 \cup S_2$. The $2s$ sets we get (each a translate of $S_1$ or $S_2$) are each $k$-AP-free. Each element of $\mathbb{Z}_N$ has probability at most $\rho$ of not being in one of the sets. Hence, the expected density of elements not in these $2s$ sets is at most $\rho$. Therefore, there is an instance of these translates for which the density of elements not in these $2s$ sets is at most $\rho$.
\end{proof}

\begin{lem}\label{addscolors general}
    Suppose there is a coloring $c:\mathbb{Z}_N \rightarrow [r-2s]$ with no monochromatic $k$-APs. Then there is a coloring $c':\mathbb{Z}_N \rightarrow [r]$ with no monochromatic $k$-APs such that the density of elements whose color lies in $[r-2s]$ is at most $(1-\delta_k(N))^{s}$. 
\end{lem}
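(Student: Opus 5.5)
We apply Corollary~\ref{coruniondense0} directly and use its $2s$ sets as the $2s$ new colors, keeping the old coloring $c$ only on what they miss. By Corollary~\ref{coruniondense0}, there are $k$-AP-free sets $A_1,\ldots,A_{2s}\subset \mathbb{Z}_N$ whose union $U=A_1\cup\cdots\cup A_{2s}$ covers all but a fraction at most $(1-\delta_k(N))^s$ of $\mathbb{Z}_N$. These sets need not be disjoint, since the translates may overlap, so we first make them disjoint by setting
\[
A_j' := A_j\setminus \bigcup_{i<j}A_i .
\]
Each $A_j'$ is a subset of a $k$-AP-free set and is therefore $k$-AP-free. The sets $A_j'$ are pairwise disjoint and their union is still $U$.

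Next we define $c':\mathbb{Z}_N\to[r]$. For $x\in A_j'$ we set $c'(x)=r-2s+j$, which lands in the $2s$ new colors $\{r-2s+1,\ldots,r\}$. For $x\notin U$ we set $c'(x)=c(x)\in[r-2s]$.

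We then check that $c'$ has no monochromatic $k$-AP.
\begin{itemize}
\item A monochromatic $k$-AP in a new color $r-2s+j$ would lie entirely inside $A_j'$, which is impossible because $A_j'$ is $k$-AP-free.
\item A monochromatic $k$-AP in an old color $a\in[r-2s]$ would lie in $\mathbb{Z}_N\setminus U$, and every element there receives the same color under $c$ as under $c'$. It would therefore be a monochromatic $k$-AP under $c$, contradicting the hypothesis.
\end{itemize}

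Finally, the elements whose color lies in $[r-2s]$ are exactly those of $\mathbb{Z}_N\setminus U$. By Corollary~\ref{coruniondense0}, these have density at most $(1-\delta_k(N))^s$.

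There is no real obstacle here. The only points needing care are making the covering sets disjoint so that $c'$ is well defined, and noting that both subsets and translates of $k$-AP-free sets remain $k$-AP-free in $\mathbb{Z}_N$.
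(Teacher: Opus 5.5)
Your proposal is correct and follows the paper's proof: you take the $2s$ $k$-AP-free sets from Corollary~\ref{coruniondense0} as the new color classes and color the uncovered elements as in $c$. Your step of making the sets disjoint spells out a detail the paper leaves implicit.
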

\begin{proof} By Corollary \ref{coruniondense0}, we can color all but $(1-\delta_k(N))^{s}N$ of the elements in $\mathbb{Z}_N$ with the $2s$ colors in $[r]\setminus [r-2s]$ so that there is no monochromatic $k$-AP. We color the remaining uncolored elements in $\mathbb{Z}_N$ as they are colored in $c$. The resulting coloring $c'$ of $\mathbb{Z}_N$ has the desired properties.
\end{proof}

In applying Lemma \ref{addscolors general}, it is helpful to use a lower bound on $\delta_k(N)$ like that in Corollary \ref{generaldeltakbound}.

\subsection{A sparse set robustly hitting arithmetic progressions} 

Throughout the rest of this section, we will find it more convenient to think in terms of complements. A {\it hitting set for $k$-term arithmetic progressions in $[N]$} is a subset of $[N]$ that contains at least one element in each $k$-term arithmetic progression in $[N]$. Note that the size of the smallest hitting set for $k$-term arithmetic progressions in $[N]$ is $N-r_k(N)$. So Szemer\'edi's theorem \cite{Szemeredi} says that, for $k$ fixed, as $N \to \infty$, every hitting set for $k$-term arithmetic progressions in $[N]$ has size $N-o(N)$. 

However, if $N$ is not too large as a function of $k$, one can hope to get much sparser hitting sets and even for the stronger notion in which the set is required to contain not only a single element but at least a $\delta$-fraction of each $k$-term arithmetic progression. The following key lemma shows that such sets exist even in certain cyclic groups as long as $N$ is at most exponential in $k$.

\begin{lem}\label{cyclicdense}
For each sufficiently small $\varepsilon>0$ and positive integer $m$ there is $\delta>0$ such that the following holds. For any integer $k \geq e^{72m/\varepsilon}$ and integer $N$ which is a product of $m$ distinct prime numbers in $[k,2.5^{k}]$, there is a subset $S \subseteq \mathbb{Z}_N$ that has density at most $\varepsilon$ such that $S$ contains at least a $\delta$-fraction of any $k$-term arithmetic progression in $\mathbb{Z}_N$. Furthermore, we may take $\delta=2^{-800m/\varepsilon}$. 
\end{lem}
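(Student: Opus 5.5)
The plan is to reduce to one prime at a time using the Chinese remainder theorem, then build the set in each $\mathbb{Z}_{p_i}$ from a few random Bohr-type sets, and finally prove a robust hitting statement there for \emph{every} nonzero common difference.

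\textbf{Reduction to one prime.} Write $\mathbb{Z}_N\cong\prod_{i=1}^m\mathbb{Z}_{p_i}$ and let $\pi_i$ be the projection to $\mathbb{Z}_{p_i}$. Take $k$-AP $a,a+d,\ldots,a+(k-1)d$ with $d\neq 0$ in $\mathbb{Z}_N$. There is some $i$ with $d\not\equiv 0\pmod{p_i}$. Since $p_i\ge k$, its projection is a genuine $k$-AP in $\mathbb{Z}_{p_i}$ with $k$ distinct terms. So it suffices to find, for each prime $p\in[k,2.5^k]$, a set $S_p\subseteq\mathbb{Z}_p$ with two properties:
\begin{itemize}
\item $S_p$ has density at most $\varepsilon/m$;
\item $S_p$ contains at least a $\delta$-fraction of every $k$-AP in $\mathbb{Z}_p$ with nonzero difference.
\end{itemize}
We then set $S=\bigcup_i\pi_i^{-1}(S_{p_i})$. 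Its density is at most $\varepsilon$, and a $k$-AP whose $i$-th projection is nonconstant meets $\pi_i^{-1}(S_{p_i})$ in at least $\delta k$ terms. The hypothesis $k\ge e^{72m/\varepsilon}$ says exactly that the per-prime density budget $\eta:=\varepsilon/m$ is at least $72/\log k$. So we must hit $k$-APs in $\mathbb{Z}_p$, with $p$ up to exponential in $k$, using a set of density about $1/\log k$.

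\textbf{Construction in $\mathbb{Z}_p$.} A uniformly random set of density $\eta$ is too weak. Chernoff gives failure probability about $e^{-c\eta k}=e^{-ck/\log k}$ per progression, while there are about $p^2\approx 6.25^k$ progressions. Instead, take $S_p$ to be a union of $L=O(1)$ dilated Bohr sets
\[
B_\theta=\{x\in\mathbb{Z}_p:\ \|x\theta/p\|_{\mathbb{R}/\mathbb{Z}}<\eta/(2L)\},
\]
possibly at a few geometric scales, with $\theta_1,\dots,\theta_L$ chosen at random or by a pigeonhole argument.

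For a $k$-AP with difference $d$, the points $x\theta/p$ form a $k$-term progression on the circle with step $\beta=d\theta/p$. Apply Dirichlet with denominators up to $k/C$, so $\beta\approx a/q$. Two cases arise:
\begin{itemize}
\item If $q$ is large compared with $1/\eta$, the $k$ points are roughly equidistributed at scale $1/q$. Hence a fraction of order $\eta/L$ of them lands in the arc, \emph{uniformly in the starting point} $a$.
\item If $q$ is small, the points cluster on $q$ short arcs and may miss $B_\theta$.
\end{itemize}
This is where several dilations, or several scales, are used. We need that for every $d\neq0$, at least one $\theta_j$ gives the good regime. Because good hitting is uniform in $a$, the union bound runs only over the $p\le 2.5^k$ values of $d$, not over all $p^2$ progressions.

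\textbf{Main obstacle.} The hard step is proving that a bounded number of dilations suffices for every $d$ simultaneously. A naive independent choice of $\theta_j$ gives failure probability about $(C/(\eta k))^L$ per $d$, which cannot beat $2.5^{-k}$ when $L=O(1)$.

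What I would try first is to exploit structure in the bad set. A difference $d$ is bad for $\theta$ when $d\theta/p$ lies near a rational with small denominator. So the bad $d$ for $\theta_1$ lie in a union of few Bohr neighborhoods of the multiples of $\theta_1^{-1}(a/q)p$. One then chooses $\theta_2,\ldots$ adaptively, for instance $\theta_{j+1}$ a suitable multiple of $\theta_j$ at a new scale, so that each step removes the previously bad differences. The number of scales needed is $O(1/\eta)=O(m/\varepsilon)$. Losing a constant factor in the hit fraction at each scale then yields $\delta=2^{-O(m/\varepsilon)}$, which matches the claimed $\delta=2^{-800m/\varepsilon}$. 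The density accounting is that each scale uses density about $\eta^2$, so the total stays at most $\eta=\varepsilon/m$. The condition $\log k\ge72m/\varepsilon$ is what allows $k$ terms to resolve all $O(m/\varepsilon)$ scales.
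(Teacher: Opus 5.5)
Your CRT reduction is correct and matches the paper's (Lemma~\ref{crtproduct}). However, the per-prime construction, which carries all of the difficulty, is left unproven; you flag it yourself as the main obstacle. Moreover, the construction you propose cannot work. The obstruction is simultaneous Dirichlet approximation.

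Take any frequencies $\theta_1,\dots,\theta_L\not\equiv 0 \pmod p$, and pigeonhole the $k^{2L}+1$ points $(d\theta_1/p,\dots,d\theta_L/p)\in(\mathbb{R}/\mathbb{Z})^L$, for $0\le d\le k^{2L}$, into boxes of side $k^{-2}$. If $p>k^{2L}$, this gives $d\not\equiv 0$ with $\|d\theta_j/p\|\le k^{-2}$ for every $j$. This condition holds for primes near $2.5^k$ whenever $L\le ck/\log k$ for a small absolute $c$.

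Along $a,a+d,\dots,a+(k-1)d$, each coordinate $x\theta_j/p$ then moves by less than $1/k$. Let $w_j$ be the width of the $j$th Bohr set. If $\|a\theta_j/p\|\ge w_j+1/k$ for all $j$, the whole progression misses $S_p$. Such $a$ exist, because the excluded proportion is at most $\sum_j 2w_j+2L/k+L/p\le \eta+o(1)<1$.

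So no union of $O(1)$, or even $O(1/\eta)=O(\log k)$, Bohr sets hits every $k$-AP even once. This holds at any scales, and whether the frequencies are chosen randomly or adaptively. Your plan of choosing $\theta_{j+1}$ to ``remove the previously bad differences'' fails because some differences are bad for all frequencies at once. Any construction of this type needs rank of order $k/\log k$.

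The missing idea is to treat progressions that do not wrap around separately. These are genuine integer APs in $\{0,\dots,p-1\}$, and the paper handles them with a construction that lives in $\mathbb{Z}$ rather than in $\mathbb{Z}_p$, with about $k/\log k$ independent sources of randomness. In Lemma~\ref{intervaldense}, for each prime $p'\in(k/101,k]$ one picks $t\approx\varepsilon\log k/6$ random residue classes mod $p'$ and takes the union. Its density is about $t\sum_{p'}1/p'\approx 4.7\varepsilon/6$.

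An integer difference $d<p\le 2.5^k$ is divisible by at most $k\log 2.5/\log(k/101)$ of these primes. So at least $k/(14\log k)$ of them see the progression spread over all residue classes, and each such prime independently contributes a factor $(2\delta)^t$. For a fixed progression and exceptional set, the failure probability is therefore $(2\delta)^{\varepsilon k/98}$. This is exponentially small in $k$ with a large enough constant to beat the $p^2 2^k$ union bound, which is exactly how the paper overcomes the $e^{-c\eta k}$ bound you correctly computed for a uniformly random set.

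Only the wrapping progressions are handled by a Bohr-type set, namely $I\cup\ell\cdot I$ of rank two (Lemma~\ref{intervaltocyclic}). Here $I$ is the balanced interval around $0$ and $\ell=\lcm(1,\dots,\lceil 1/\varepsilon\rceil)$. Your Dirichlet dichotomy does work in that setting, because a wrapping progression with small common difference is forced to cross $0$ and hence meets $I$. Non-wrapping progressions lack this property, and the counterexample above exploits exactly that.
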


The goal of this section is to prove Lemma \ref{cyclicdense}. The proof utilizes the following interval variant.

\begin{lem}\label{intervaldense}
For each sufficiently small $\varepsilon>0$ there is $\delta>0$ such that the following holds. For any integer $k \geq e^{36/\varepsilon}$ and integer $k \leq N \leq 2.5^k$, there is a subset $S \subseteq [N]$ with $|S| \leq \varepsilon N$ that contains at least a $\delta$-fraction of any $k$-term arithmetic progression in $[N]$. Furthermore, we can take $\delta =2^{-400/\varepsilon}$. 
\end{lem}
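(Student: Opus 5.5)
The plan is a multiscale construction in which each scale is cheap in density but, for a suitable scale, captures a constant proportion of any given progression. Fix a base $b$ and write elements of $[N]$ in base $b$, so there are about $L=\log_b N\le k\log 2.5/\log b$ digit positions. For a $k$-AP $P=\{a+jd\}$, let $v$ be the position of the leading digit of $d$. Then the digits of $a+jd$ in the roughly $\log_b k$ positions between $v$ and $v+\log_b k$ behave like a rotation: as $j$ runs over $[k]$, the block of digits at any position $i$ in this range sweeps through $\{0,\dots,b-1\}$ in a nearly equidistributed way. The exception is a bounded number of positions near $v$ and near $v+\log_b k$, where the rotation is too short or too slow. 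This is the same mechanism as in the Erd\H{o}s--Tur\'an argument behind Corollary \ref{szemcor}. The difference is that here we want a constant proportion of terms captured, not just one term.

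\textbf{Definition of $S$.} Choose a set $I$ of ``active'' positions spaced roughly $\Delta$ apart, with a random offset. For each active position $i$, put into $S$ all $n$ whose digit at position $i$ lies in a fixed window $W\subset\{0,\dots,b-1\}$ of relative size $\eta$. Then
\[
|S|\le \eta\,|I|\,N\approx \eta\,(L/\Delta)\,N .
\]
Any progression whose ``active range'' $[v,v+\log_b k]$ contains an active position $i$ well inside it has a proportion about $\eta$ of its terms with digit in $W$ at position $i$. This follows from a discrepancy estimate for the sequence $\{jd/b^{i+1}\}$, $j\in[k]$, which is equidistributed at scale $1/b$ once $k d\gg b^{i+1}\gg d$.

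\textbf{Choice of parameters.} We need $\Delta\lesssim\log_b k$ and $\eta L/\Delta\le\varepsilon$. Taking $\eta$ to be a constant gives $\delta\approx\eta$, but the density is then about $\eta\log N/\log k\approx\eta k/\log k$, which is far too large. This density budget is the main obstacle. A single window per scale cannot work, since density and hit-fraction are both about $\eta$ per scale and the scales add up in density.

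\textbf{Plan to overcome the obstacle.} I would make the windows at different scales strongly correlated, so that their union has density about $\varepsilon$ instead of $\eta L/\Delta$. Concretely, let $S$ be the set of $n$ whose top $O(1/\varepsilon)$ ``active'' digit blocks (taken at spacing about $\log_b k$, with $b$ chosen as $e^{\Theta(1/\varepsilon)}$) satisfy one joint condition of probability $\varepsilon$. A good candidate is the following. Group the positions into $1/\varepsilon$ residue classes modulo a block length $\ell\approx\varepsilon^{-1}$. Take $S$ to be the set of $n$ whose digits in residue class $c(n)$ are all $0$, where $c(n)$ is itself read off from a coarse digit. Then show that every $k$-AP sees, for at least one class, all of its $\lesssim 36/\varepsilon$ relevant digits lining up with probability $\ge 2^{-O(1/\varepsilon)}$ along the progression. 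The hypothesis $k\ge e^{36/\varepsilon}$ is exactly what ensures that the active range $\log_b k\ge 36/(\varepsilon\log b)$ contains enough positions from every residue class. The bound $\delta=2^{-400/\varepsilon}$ then comes from requiring $O(1/\varepsilon)$ independent-looking digit conditions, each with constant probability, along the progression.

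I expect the hard step to be the quantitative equidistribution of several digits simultaneously along a $k$-term progression, including at the boundary positions near $v$. I would handle it by first restricting to a sub-progression of length $k/2$ on which the relevant digits carry no interference from lower digits, and then applying a Weyl/Erd\H{o}s--Tur\'an discrepancy bound at base $b$.
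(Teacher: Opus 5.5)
There is a genuine gap. You diagnose the obstacle correctly. A $k$-AP with $d\in[b^v,b^{v+1})$ only sees, in a controlled way, the roughly $\log_b k$ digit positions just above $v$, where the truncations $\lfloor (a+jd)/b^{v+1}\rfloor$ simply count upward. There are $\approx\log_b N$ positions in total, so one window per scale costs density about $\eta\log N/\log k$. But the proposal never produces a set that gets around this, and the step you postpone is not true as stated.

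\textbf{The candidate set fails.} As written, it fails outright. For $d=b^v$ the digits below position $v$ are frozen at those of $a$. So for $v\ge\ell$, choosing $a$ with a nonzero digit in every residue class mod $\ell$ below $v$ makes ``all digits in class $c(n)$ are $0$'' false for every term, i.e.\ $S\cap P=\emptyset$. (Ranging over all positions of a class, that condition also gives $S$ density as small as $b^{-\Theta(\varepsilon^2 k)}$.) If you instead restrict to the top $O(1/\varepsilon)$ blocks, then for $d=1$ those digits change at most once along $P$. So $1_S$ is essentially constant on $P$, and some $a$ again gives $S\cap P=\emptyset$. Any working condition must therefore live inside each progression's own window, and those windows range over $\approx k/\log k$ disjoint scales. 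That is the original density problem again.

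\textbf{The parameters do not fit.} With $\log b=\Theta(1/\varepsilon)$ and $k=e^{36/\varepsilon}$, one has $\log_b k=\Theta(1)$, so the window cannot meet $\approx 1/\varepsilon$ residue classes. Also, a $k$-term progression cannot be jointly equidistributed over the $b^{\Theta(1/\varepsilon)}=e^{\Theta(\varepsilon^{-2})}>k$ patterns of $\Theta(1/\varepsilon)$ digits.

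\textbf{Joint equidistribution across blocks fails for structured $d$.} Take $K=b^w$ and $d=K-1$. Away from the rare steps where $u_0=0$, each step maps the two lowest base-$K$ digits $(u_0,u_1)\mapsto(u_0-1,u_1+1)$, so they move in lockstep. A Weyl or Erd\H{o}s--Tur\'an estimate controls one rotation at a time, not such adversarial cross-scale correlations.

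\textbf{The missing idea} is to use ``tests'' that \emph{every} progression sees simultaneously, namely residues modulo the primes $p\in Q=(k/101,k]$. If $p\nmid d$, a $k$-AP is exactly equidistributed mod $p$: each class contains at least $\lfloor k/p\rfloor\ge k/(2p)$ terms. Since $d<N\le 2.5^k$ is divisible by at most $k\log 2.5/\log(k/101)$ primes of $Q$, every progression sees at least $k/(14\log k)$ of the $\approx 0.99k/\log k$ primes.

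The paper puts into $S$ the union of $t=\lfloor\varepsilon\log k/6\rfloor$ random distinct classes for each $p\in Q$. The bound $|S|\le\varepsilon N$ comes from $\sum_{p\in Q}1/p<4.7/\log k$. By independence across primes, for any fixed $D\subset P$ of size $\lfloor\delta k\rfloor$ one gets $\Pr[S\cap P\subseteq D]\le(2\delta)^{t|Q'|}\le(2\delta)^{\varepsilon k/98}$. This beats the $N^2 2^k$ choices of $(P,D)$ when $\delta=2^{-400/\varepsilon}$.

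These $\Omega(\varepsilon k)$ independent trials per progression are exactly what a base-$b$ digit construction does not supply, since there each progression interacts in a controlled way with only $O(\log_b k)$ positions. The hypothesis $k\ge e^{36/\varepsilon}$ is used to ensure $t\ge\varepsilon(\log k)/7$ (and that $k$ is large), not to fit residue classes into an active window.
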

\begin{proof}
Let $\ell=\lceil k/101 \rceil$ and $Q$ denote the set of primes in the interval $(\ell,k]$. By the prime number theorem, $|Q| \geq 0.99k/\log k$. Let $t=\lfloor \varepsilon (\log k)/6\rfloor \geq \varepsilon (\log k)/7$, where the inequality uses $k \geq e^{36/\varepsilon}$. 

Independently for each prime $p \in Q$, choose $t$ distinct congruence classes modulo $p$ uniformly at random, and let $S_p$ be the elements in $[N]$ that are in at least one of these $t$ congruence classes. Let $S=\bigcup_{p \in Q} S_p$. 

{\bf Step 1: Density of $S$.} We first show that $|S| \leq \varepsilon N$. Indeed, $$|S|\le \sum_{p\in Q} |S_p|\le \sum_{p\in Q} (N/p+1)t = |Q|t+tN \sum_{p\in Q}1/p\le \varepsilon k /6+4.7\varepsilon N/6 < \varepsilon N.$$
To see the second to last inequality, let $$f(x)=\sum_{\substack{p~\textrm{prime} \\ p \leq x}} 1/p,$$ so $$\sum_{p \in Q} 1/p=f(k)-f(\ell)=\frac{(1+o(1))}{\log k}\sum_{n=k/101}^k 1/n = (1+o(1))\frac{\log(101)}{\log k}<4.7/\log k,$$ where the second equality holds by the prime number theorem, and the final inequality is due to $k$ being sufficiently large (which in turn holds as $\varepsilon$ is assumed to be sufficiently small).

{\bf Step 2: Discrepancy of $S$ in $k$-APs.} We next show that with positive probability the set $S$ has density at least $\delta$ in any $k$-term arithmetic progression in $[N]$. The proof is by a union bound. Fix a $k$-term arithmetic progression $P$ with elements in $[N]$ and let $d$ denote the common difference of $P$. Fix a subset $D$ of the arithmetic progression $P$ with size $u=\lfloor \delta k \rfloor$. 

If $d$ is a multiple of $s$ primes in $Q$, then the product of these primes is a factor of $d$ and hence $d \geq \ell^s$. As $N > d$, we have $s \leq \log_{\ell} N \leq k(\log 2.5)/\log \ell$. Let $Q'$ be the subset of $Q$ of primes $p$ that are not a factor of $d$. So $$|Q'| \geq 0.99k/\log k - k(\log 2.5)/\log \ell \geq k/(14\log k),$$using $\log\ell\ge\log k-\log(101)$ and the largeness of $k$. For each prime $p \in Q'$, we partition $P$ into subsets $P_i$ with $i \in \mathbb{Z}_p$ being those elements of $P$ that are congruent to $i \pmod p$. 

Each $P_i$ has size at least $\lfloor k/p \rfloor \geq k/(2p)$ as $p \leq k$. So the set $D$, of size $\delta k$, can contain all elements of at most $2\delta p$ of the subsets $P_i$. So the probability that $S_p \cap P \subseteq D$ is at most $(2\delta)^t$. As these choices are done independently for each prime $p \in Q'$, the probability that $S \cap P \subseteq D$ is at most $((2\delta)^t)^{|Q'|} \leq (2\delta)^{\varepsilon k/98}$. 

The number of choices of the progression $P$ is at most $N^2$. The number of choices for the subset $D$ of $P$ is ${k \choose u} \leq 2^k$. Hence, by the union bound, the probability that $S$ has density less than $\delta$ in some $k$-term arithmetic progression in $[N]$ is at most $$N^2 2^k (2\delta)^{\varepsilon k/98}<(2.5)^{2k}2^k(2^{1-400/\varepsilon})^{\varepsilon k/98}<1.$$

Since $S$ has the desired properties with positive probability, there exists such a desired set $S$, which completes the proof. 
\end{proof}

\subsection{From intervals to cyclic groups}\label{subsect:cyclicfrom}

We represent the elements of $\mathbb{Z}_N$ as $0,1,\ldots,N-1$. For simplicity of presentation, we will often identify an element in $\mathbb{Z}_N$ by its representative in $\{0,1,\ldots,N-1\}\subset \mathbb{Z}$. If $x,y \in \mathbb{Z}_N$, the {\it distance} between $x$ and $y$ is $\min(|x-y|,N-|x-y|)$, where $|x-y|$ is the absolute value of $x-y$ when we have replaced $x$ and $y$ by their identified integer representatives. We call a $k$-AP in $\mathbb{Z}_N$ {\it non-wrapping} if, when replacing its terms by its representatives in $\{0,1,\ldots,N-1\} \subset \mathbb{Z}$, it still forms a $k$-AP in $\mathbb{Z}$.  Otherwise, we say that $P$ {\it wraps around}. Equivalently, a $k$-AP $P$ in $\mathbb{Z}_N$ wraps around if it has no lift to a $k$-AP in $\mathbb{Z}$ contained in an interval $[(i-1)N,iN-1]$ for some integer $i$.

The next lemma shows that there is a small subset of $\mathbb{Z}_N$ that robustly intersects every $k$-term arithmetic progression in $\mathbb{Z}_N$ that wraps around. In the proof, we use the well-known fact that the least common multiple of the first $m$ positive integers is $e^{(1+o(1))m}$. This can easily be shown to be equivalent to the prime number theorem. 

\begin{lem}\label{intervaltocyclic} Let $\varepsilon>0$ be sufficiently small and $k>3^{1/\varepsilon}$. Suppose $N \geq k$ is prime. Then, there is a subset $T \subset \mathbb{Z}_N$ with density at most $4\varepsilon$ that has density at least $\varepsilon/4$ in any $k$-term arithmetic progression in $\mathbb{Z}_N$ that wraps around. 
\end{lem}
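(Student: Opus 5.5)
The plan is to take $T$ to be a union of a few dilated ``neighbourhoods of $0$'' in $\mathbb{Z}_N$. Since $N$ is prime, every nonzero integer $q$ with $|q|<N$ is invertible mod $N$. For a parameter $m\approx 1/\varepsilon$ and widths $c_q>0$ I set
\[
T=\bigcup_{q=1}^{m}\{x\in\mathbb{Z}_N:\ qx \bmod N\in[-c_qN,c_qN]\}.
\]
Each piece is the image under $q^{-1}$ of an interval, so it has density about $2c_q$. The widths are chosen so that $\sum_q 2c_q\le 4\varepsilon$, for instance $c_1=\varepsilon$ and $c_q$ decaying in $q$. An alternative with the same flavour, and the one suggested by the $\operatorname{lcm}$ fact quoted before the statement, uses a single multiplier $L=\operatorname{lcm}(1,\dots,m)$ with $m\approx\log(1/\varepsilon)$. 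Then $L\le e^{(1+o(1))m}$ is small compared to $k>3^{1/\varepsilon}$, and I would set $T=\{x: Lx\bmod N\in[-\varepsilon N,\varepsilon N]\}$. Since $L$ is invertible, this $T$ has density about $2\varepsilon$, and every $q\le m$ divides $L$. I would first try this single-$L$ version because its density bound is immediate.

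To show that $T$ hits wrapping progressions, fix a wrapping $k$-AP $P=\{a+jd\}$ and let $d'\in(-N/2,N/2]$ represent $d$. I apply Dirichlet's theorem to $d'/N$ with parameter $Q\approx k\varepsilon$. This gives $q\le Q$ with $\|qd/N\|\le 1/Q$, where $\|\cdot\|$ is the distance to the nearest integer. I then split into two cases according to the size of $q$.

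\textbf{Large $q$} ($q>m$ in the lcm version). The points $j d/N \bmod 1$ for $j<k$ split into $q$ residue lanes $j\equiv r \pmod q$. Within each lane the points move by at most $1/Q$ per step. So the full progression is roughly equidistributed at scale $1/q$ and visits every arc of length $\gg 1/q$ in a proportion comparable to that arc's length. The set $\{x: Lx\bmod N\in[-\varepsilon N,\varepsilon N]\}$ is a union of $L$ arcs, each of length $2\varepsilon/L$, spaced $1/L$ apart. For this case I would replace it by the plain interval $[-\varepsilon N,\varepsilon N]$ inside $T$, i.e. also include $q=1$. A Weyl/Erd\H{o}s--Tur\'an-type counting over the lanes then gives at least $(\varepsilon/4)k$ hits. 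This case does not use wrapping at all.

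\textbf{Small $q$} ($q\le m$, so $q\mid L$). Here $L(a+jd)$ is a slowly drifting progression with step $(L/q)\cdot qd$, and I need it to spend a $\ge\varepsilon/4$ fraction of its $k$ steps inside $[-\varepsilon N,\varepsilon N]$. This is exactly where wrapping must be used, and I expect it to be the main obstacle. For $q=1$ the argument is clean. The condition $|d'|\cdot k\lesssim N/\varepsilon$ together with wrapping forces $P$ to cross $0$. Near the crossing, the number of terms within distance $\varepsilon N$ of $0$ is about $\varepsilon N/|d'|$, which is at least $\varepsilon k/4$ unless $|d'|k$ is large. If $|d'|k$ is large, we are back in the equidistributed regime. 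For $2\le q\le m$ I would argue lane by lane: the lanes are progressions with difference $qd$ of small representative. Wrapping of $P$ forces some lane to wrap modulo $N$, and I would then try to transfer this crossing to a crossing of $0$ by $L\cdot(\text{lane})$, using the multiplier $L/q$ and the drift bound. If this transfer fails, I would instead include all the dilates $q^{-1}[-c_qN,c_qN]$ separately, tuning the widths $c_q$ and relying on $k>3^{1/\varepsilon}$ to absorb the resulting losses.
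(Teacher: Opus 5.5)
There is a genuine gap, and it is in the case you flagged as the main obstacle ($2\le q\le m$). The set you propose fails there, so no argument that transfers the crossing can rescue it. Take $T=I\cup\{x:Lx\bmod N\in I\}$ with $I=[-\varepsilon N,\varepsilon N]$ and $L$ even, and let $d=(N-1)/2$, so $2d\equiv-1$.

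\begin{itemize}
\item $P=\{a+jd:0\le j<k\}$ is the union of two intervals of length about $k/2$, ending at $a$ and at $a+d$.
\item $P$ wraps around: any integer lift has common difference of absolute value at least $(N-1)/2$, hence span at least $3(N-1)/2>N-1$.
\item $Ld\equiv-L/2$, so $L\cdot P$ lies in an interval of length at most $Lk/2$ ending at $La$.
\end{itemize}

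If $N\ge 4Lk$ and $\varepsilon<1/8$, pick $a$ with $La\in[N/4,3N/4]$; there are about $N/2$ such choices. Discard the $O(\varepsilon N+k)$ values for which $a$ or $a+d$ lies within $\varepsilon N+k$ of $0$. Then $P\cap T=\emptyset$.

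For $N$ large, the same choice of $a$ plus a union bound over $q\le m$ defeats your fallback $\bigcup_{q\le m}\{x:qx\bmod N\in[-c_qN,c_qN]\}$, since each $q\cdot P$ lies in two intervals of length at most $qk/2$. The obstruction is structural. Your set $\{x:Lx\bmod N\in I\}=L^{-1}\cdot I$ is a union of $L$ short arcs spaced $N/L$ apart. A progression with $d$ near $cN/q$ concentrates near an approximate coset of $\frac{N}{q}\mathbb{Z}$, and the fact that $P$ wraps says nothing about where that coset sits.

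Separately, $m\approx\log(1/\varepsilon)$ is too small for your large-$q$ case. Equidistribution at scale $1/q$ only forces a proportion $\gtrsim\varepsilon$ in an arc of length $2\varepsilon$ once $q\gtrsim 1/\varepsilon$. The hypothesis $k>3^{1/\varepsilon}$ is there precisely so that $\lcm(1,\dots,\lceil 1/\varepsilon\rceil)<2.8^{1/\varepsilon}$ is affordable.

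The missing idea is to dilate in the opposite direction. The paper sets $\ell=\lcm(1,\dots,\lceil 1/\varepsilon\rceil)$, $M=\lfloor\varepsilon N\rfloor$, lets $I$ be the balanced interval of size $2M$, and takes $T=I\cup\ell\cdot I$. Here $\ell\cdot I=\{\ell y:y\in I\}$ is the \emph{image} of $I$ under multiplication by $\ell$, so $|P\cap\ell\cdot I|=|\ell^{-1}P\cap I|$.

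Dirichlet is applied to $d_1=\ell^{-1}d$, not to $d$, and only at the coarse level $1/\varepsilon$. This gives some $q\le 1/\varepsilon$ with $qd_1\equiv\pm d'$ and $0<d'\le M$. Then $\ell^{-1}P$ splits into $q$ progressions of difference $d'$ and length at least $k'=\lfloor k/q\rfloor$.

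\begin{itemize}
\item If $k'd'\ge 9N$, each piece makes at least $9$ full rotations and so has proportion at least $|I|/(5N)\ge\varepsilon/4$ in $I$. Hence $P$ has that proportion in $\ell\cdot I$.
\item Otherwise, because $q\mid\ell$, we get $d\equiv\pm(\ell/q)d'$ with $(\ell/q)d'<18\ell N/k\le\varepsilon N/5$, so $d\le M/4$. A wrapping progression with such a small difference must cross the cut between $N-1$ and $0$. That puts at least $\lfloor M/d\rfloor$ terms of $P$ in $I$; if $kd\ge N$, one averages over full rotations instead. Either way $P$ has proportion at least $|I|/(4N)\ge\varepsilon/4$ in $I$.
\end{itemize}

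Multiplying by $\ell$ annihilates every denominator $q\le 1/\varepsilon$. So the only progressions $\ell\cdot I$ can miss are those whose own difference is near $0$, and these are exactly the ones that wrapping forces through $I$. In the example above $d$ is not small, so the first case applies.
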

\begin{rmk}
    Though we present a simple physical proof, the below argument draws inspiration from known results in Fourier analysis (such as Weyl's equidistribution theorem). The fact we exploit is that either a $k$-AP should be ``well-distributed'' over $\mathbb{Z}_N$ (intersecting all dense intervals many times), or it has a common difference $d$ that is quite structured (with $d/N$ being very close to a ``bounded complexity rational'').
    \end{rmk}

\begin{rmk}
The assumption that $N$ is prime could be weakened to ``the smallest prime factor of $N$ is at least $\lfloor 1/\varepsilon\rfloor+1$'', but then we would need to count intersections with $k$-APs using multiplicity.
\end{rmk}

We will first need a simple lemma and some definitions. 

An {\it interval} in $\mathbb{Z}_N$ is a sequence $a,a+1,\ldots,a+k-1$ of consecutive elements. That is, an interval in $\mathbb{Z}_N$ is an arithmetic progression with common difference one. For a positive integer $i$, the {\it $i^{\textrm{th}}$ rotation} of an arithmetic progression $a,a+d,\ldots,a+(k-1)d$ in $\mathbb{Z}_N$ is the subsequence $(a+jd: (i-1)N \leq jd < iN~\textrm{and}~j \in \{0,1,\ldots,k-1\})$. It is a {\it full rotation} if $kd \geq iN$. Note that each rotation refers to a successive block of length $N$ traversed by the lifted progression $a,a+d,\ldots,a+(k-1)d$ in $\mathbb{Z}$. In an arithmetic progression $a,a+d,\ldots,a+(k-1)d$ in $\mathbb{Z}_N$, by  reversing the arithmetic progression if needed (which keeps the same terms but in reverse order), the (representative of the) common difference $d$ we may assume is a positive integer at most $N/2$. 

Say an interval $I\subset\mathbb{Z}_N$ is \textit{balanced} if $i\in I$ implies $-1-i\in I$. Note that if a non-empty interval $I \subset \mathbb{Z}_N$ is balanced, then $I=\mathbb{Z}_N$ or there is a positive integer $M<N/2$ such that the set of representatives of elements in $I$ is $\{0,1,\ldots,M-1\} \cup \{N-M,N-M+1,\ldots,N-1\}$, and $I$ has size $2M$. Thus a balanced interval in $\mathbb{Z}_N$ is centered at the cut between $N-1$ and $0$.

\begin{lem}\label{simplemma3} Let $M$ be a positive integer and $N$ be prime. Let $I \subsetneq \mathbb{Z}_N$ be a balanced interval of size $2M$.  
Let $P$ be a $k$-AP in $\mathbb{Z}_N$ with common difference $d \in [M]$. \begin{enumerate} 
\item If $kd \geq 9N$, then $|P \cap I| \geq |P||I|/(5N)$. 
\item If $P$ wraps around and $d \leq M/4$, then $|P \cap I| \geq |P||I|/(4N)$. 
\end{enumerate}
\end{lem}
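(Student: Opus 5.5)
We lift $P$ to the integer progression $a, a+d, \ldots, a+(k-1)d$ with $a\in\{0,\ldots,N-1\}$ and $d\in[M]$, so $d$ is a positive integer with $d \le M < N/2$. Membership in $I$ depends only on the residue modulo $N$. In each block $[(i-1)N, iN-1]$, the set $I$ pulls back to the window $[(i-1)N, (i-1)N+M-1]$ together with the window $[iN-M, iN-1]$. These two windows are adjacent across the boundary $iN$, so consecutive blocks glue together into integer intervals $J_i=[iN-M, iN+M-1]$ of length exactly $2M=|I|$. The progression steps by $d\le M$, which is less than the length $2M$. Hence any stretch of the lifted progression that crosses the point $iN$ (with terms on both sides, or passing through) meets $J_i$ in at least $\lfloor 2M/d\rfloor \ge 2M/d - 1 \ge M/d$ terms, provided the progression extends at least $M$ to each side of $iN$. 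If it does not, we count only the side that is available. The whole plan is to count crossings of multiples of $N$ and multiply by roughly $M/d$ hits per crossing.

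For part 1, suppose $kd \ge 9N$. The lifted progression spans the integer interval $[a, a+(k-1)d]$, of length $(k-1)d \ge kd - d \ge 9N - N/2$. It therefore fully contains at least $\lfloor (k-1)d/N\rfloor - 1 \ge 7$ windows $J_i$, and in fact at least $(k-1)d/N - 2 \ge kd/(N)\cdot(1/1.4)$-ish of them. Here we use $kd\ge 9N$ to absorb both the additive loss of $2$ and the loss from $k-1$ versus $k$. Each fully contained window receives at least $\lfloor 2M/d\rfloor \ge 2M/(2d)\cdot\ldots$ terms. More carefully, since $d\le M$ we have $\lfloor 2M/d\rfloor \ge 2M/d - 1 \ge M/d$; a sharper bound is $\lfloor 2M/d\rfloor \ge (2/3)(2M/d)$ when $2M/d\ge 2$. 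Multiplying gives
\[
|P\cap I| \;\ge\; \Big(\tfrac{kd}{N}-2\Big)\cdot\tfrac{2M}{d}\cdot c \;\ge\; \tfrac{k\cdot 2M}{5N} \;=\; \tfrac{|P||I|}{5N}.
\]
The one point needing care is that the windows $J_i$ are disjoint, which holds because $2M < N$, so no term is double counted.

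For part 2, $P$ wraps around, so its lift crosses some multiple $iN$; that is, terms lie on both sides of $iN$ and every term lies within $[a, a+(k-1)d]$. We have $d\le M/4$, and if $kd\ge 9N$ we are done by part 1, so assume $kd<9N$. There are two subcases.

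\emph{Long on both sides.} If the progression extends at least $M$ on both sides of some crossing $iN$, then $J_i$ is fully covered and contributes at least $2M/d - 1 \ge 7M/(4d)$ terms. We sum over all crossings: the number of full windows is at least about $kd/N - 1$, and there is at least one. Comparing with the target $k\cdot 2M/(4N) = kM/(2N)$ splits further by the size of $kd$. If $kd\le 2N$, the target is at most $M/d$, which is met by the single crossing. If $kd$ is larger, there are at least $kd/N - 1 \ge kd/(2N)$ full windows, each giving $7M/(4d)$ terms, for a total of at least $7kM/(8N)$.

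\emph{Short on one side.} If the progression extends less than $M$ on one side of its only crossing, we instead use that the whole progression has length $(k-1)d$. Then either it lies mostly inside $J_i$, giving $|P\cap I|\ge$ a constant fraction of $k$, which is at least $k|I|/(4N)$ because $|I|<N$; or its longer side has length at least $M$, giving at least $M/d$ hits on that side alone.

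The main obstacle is this last part-2 subcase with a single crossing near the end of the progression. We must verify that $M/d$ or "most of $P$" always dominates $kM/(2N)$ when $kd<9N$. This is where $d\le M/4$ is used: it guarantees at least $4$ terms per side-window, so the rounding losses stay within the constant $4$. We will handle this by an explicit comparison of $k$ against $N/d$.
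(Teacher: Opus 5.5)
\textbf{Part 1.} Your part 1 is sound. The lift contains at least $\lfloor (k-1)d/N\rfloor-1> kd/N-2.5$ windows $J_i$. Each has $\lfloor 2M/d\rfloor\ge\tfrac23\cdot\tfrac{2M}{d}$ terms, and for $kd\ge 9N$ this beats $k|I|/(5N)$ comfortably. This is the paper's count, with windows centred at multiples of $N$ in place of rotations starting at the first term.

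\textbf{Part 2.} Part 2, however, is not yet a proof; there are three gaps.

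(i) ``If $kd\ge 9N$ we are done by part 1'' fails as stated. Part 1 only gives $|P||I|/(5N)$, and part 2 demands $|P||I|/(4N)$. Your part-1 computation in fact yields about $0.48\,k|I|/N$, so this is repairable, but you must rerun the computation rather than cite the statement.

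(ii) Your dichotomy is not exhaustive. ``Room $M$ on both sides of some crossing'' versus ``short on one side of its \emph{only} crossing'' omits one configuration: exactly two crossings $iN,(i+1)N$, with the progression starting fewer than $M$ before $iN$ and ending fewer than $M$ after $(i+1)N$ (e.g.\ first term $iN-1$, last term in $[(i+1)N,(i+1)N+d)$). Then no $J_i$ lies in the span. It can be handled by the two inner half-windows, which give $2\lfloor M/d\rfloor$ terms against $kM/(2N)<M/d+\tfrac14$, but it has to be done.

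(iii) The single-crossing subcase, which you rightly call the crux, is only announced. It closes only after two further steps. First, use the single-crossing constraint $k\le (N+M-1)/d+1$ together with $M<N/2$ to get $kM/(2N)<\tfrac34\cdot\tfrac Md+\tfrac14$. Second, compare this with the exact count $\lfloor M/d\rfloor$, plus the terms on the short side. With the long-side terms alone, the needed inequality $\lfloor x\rfloor\ge\tfrac34x+\tfrac14$ for $x=M/d\ge4$ has no slack as $x\to5^-$. So the rounding losses are not absorbed for free.

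Similarly, your claim of $kd/N-1$ full windows in the two-sided case is too optimistic: there can be just one when $kd/N$ is near $3$. The inequality you need still survives there.

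\textbf{How the paper avoids this.} The paper avoids all of this endpoint casework by splitting on $kd<N$ versus $kd\ge N$, rather than on the arrangement of crossings.
\begin{itemize}
\item If $kd<N$ and $P\not\subset I$: after possibly reflecting by $x\mapsto -1-x$, the progression crosses the cut and then leaves $I$ going forward. So it has at least $\lfloor M/d\rfloor\ge M/(2d)$ terms in $I$, while $|P|=k<N/d$.
\item If $kd\ge N$: it counts per rotation anchored at the first term, so only the last rotation can be partial, and that one has no more terms than a full one. A fraction at least $(\lfloor |I|/d\rfloor-1)/\lceil N/d\rceil\ge\tfrac{56}{81}|I|/N$ of each full rotation lies in $I$ (using $|I|\ge 8d$). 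The factor $C/(C+1)\ge\tfrac12$ then finishes.
\end{itemize}
Anchoring at the first term is what eliminates the partial windows at both ends that drive your case analysis.
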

\begin{proof}

We first prove part 1. All nonempty rotations of $P$ except possibly the last one are full. The number of full rotations of $P$ is $C:=\lfloor kd/N \rfloor \geq 9$. If the $i^{\textrm{th}}$ rotation is full, then it has length $\lceil iN/d \rceil - \lceil (i-1)N/d \rceil$, which is either $\lfloor N/d \rfloor$ or $\lceil N/d \rceil$. It follows that the number of elements of a rotation that is not full is at most the number of elements of any of the full rotations. The number of elements in any interval $I$ of $\mathbb{Z}_N$ in any full rotation of $P$ with common difference $d$ is at least $\lfloor |I|/d \rfloor-1$. Hence, the fraction of elements of a full rotation of $P$ that lie in $I$ is at least $$\frac{\lfloor |I|/d \rfloor-1}{\lceil N/d \rceil} \geq \frac{|I|/(3d)}{\lceil N/d \rceil} \geq (2/9)|I|/N,$$ where the first inequality uses $|I| \geq 2M \geq 2d$. Thus, the fraction of terms of $P$ that are in $I$ is at least $\frac{C}{C+1}(2/9)|I|/N \geq |I|/(5N)$. This completes the proof of part 1.

To prove part 2, if $P \subset I$, then we are done. So suppose otherwise. Since $P$ wraps around and $P \not \subset I$, some two consecutive terms of $P$ cross the cut between $N-1$ and $0$. Then, after possibly applying the symmetry $x \mapsto -1-x$ (which preserves $I$), there is a term $b \in \{0,\ldots,d-1\}$, and moving forward from $b$, the terms $b+jd$ with $b+jd \leq M-1$ are all in $P \cap I$. Together with the previous term $b-d \in P \cap I$, this gives at least $\lfloor M/d \rfloor$ elements in $P \cap I$. 

If $P$ has no full rotation, then $kd<N$, so $|P \cap I|/|P| \geq \lfloor M/d \rfloor/(N/d) \geq \frac{1}{2}M/N \geq |I|/(4N)$. 

If $P$ has a full rotation, then by the same argument as in part 1, the fraction of each full rotation of $P$ that lies in $I$ is at least $(\lfloor |I|/d \rfloor -1)/\lceil N/d \rceil \geq (\frac{7}{9}|I|/d)/(\frac{9}{8}N/d) > \frac{7}{10}|I|/N$. The last inequality used $|I| =2M \geq 8d$ and $N>|I| \geq 8d$. Hence, if $C \geq 1$ is the number of full rotations of $P$, then $|P \cap I|/|P| \geq \frac{C}{C+1}\frac{7}{10}\frac{|I|}{N} \geq \frac{|I|}{4N}$.
\end{proof}

\begin{proof}[Proof of Lemma \ref{intervaltocyclic}]
Let $\ell=\lcm(1,2,\ldots,\lceil 1/\varepsilon\rceil)$, so $\ell < 2.8^{1/\varepsilon}$. As $N$ is a prime and $N \geq k>1/\varepsilon$, $\ell$ is invertible in $\mathbb{Z}_N$. Let $M:=\lfloor \varepsilon N \rfloor$ and $I \subset \mathbb{Z}_N$ be the balanced interval of size $2M$. Let $\ell \cdot I$ be the dilation of $I$ by $\ell$.  We will prove that $T:=I \cup \ell \cdot I$ has the desired properties. Note that $|T| \leq |I|+|\ell \cdot I|= 4M \leq 4\varepsilon N$.

Consider a $k$-AP $P:=(a,a+d,\ldots,a+(k-1)d)$ in $\mathbb{Z}_N$. We may assume, after possibly reversing the progression, that the representative of its common difference is a positive integer $d \leq N/2$. Let $P' = \ell^{-1} \cdot P$ be the dilation of $P$ by $\ell^{-1}$, so that $P'$ is an arithmetic progression with common difference $d_1=\ell^{-1}d$. 

By Dirichlet's approximation theorem (i.e., the pigeonhole principle), there is a positive integer $q \leq 1/\varepsilon$ such that $qd_1$ has distance to $0$ in $\mathbb{Z}_N$ at most $\lfloor N/(\lfloor 1/\varepsilon \rfloor+1)\rfloor \leq \lfloor \varepsilon N \rfloor =M$. Let $d'$ be the distance of $qd_1$ to $0$ in $\mathbb{Z}_N$. We can thus partition the arithmetic progression $P'$ into arithmetic progressions $P_i$ with $i \in \{0,1,\ldots,q-1\}$, each with common difference $d'$ and each with at least $k':=\lfloor k/q \rfloor$ elements. Explicitly, set of elements of $P_i$ is $\{(a\ell^{-1}+id_1)+j(qd_1):0 \leq i+qj \leq k-1\}$ and we reverse the progression if necessary to get common difference $d'$. 

If $k'd' \geq 9N$, as $|I| =2M \geq 2d'$, then we can apply the first part of Lemma \ref{simplemma3} to each $P_i$ to see that the fraction of $P_i$ that is in $I$ is at least $|I|/(5N)$. Hence, in this case, the fraction of $P'$ in $I$ is at least $|I|/(5N)=2\lfloor \varepsilon N\rfloor/(5N) \geq \varepsilon/4$ using that $\varepsilon>0$ is sufficiently small.  Equivalently, the fraction of $P=\ell \cdot P'$ that lies in $\ell  \cdot I$ is at least $\varepsilon/4$. 

Otherwise, $k'd' < 9N$. Note that it follows from the definitions of $q$ and $\ell$ that $q$ divides $\ell$. Since $k'=\lfloor k/q \rfloor$ and $q \leq 1/\varepsilon < k/2$, we have $qk' \geq k/2$. Hence, $$(\ell/q)d' < (\ell/q)9N/k' \leq  (18\ell/k)N \leq \varepsilon N/5,$$ where we used $\varepsilon$ is sufficiently small, $\ell<2.8^{1/\varepsilon}$, and $k>3^{1/\varepsilon}$. 

As $qd_1\equiv \pm d' \pmod{N}$, multiplying by $\ell/q$ gives $d \equiv \pm (\ell/q)d' \pmod{N}$. Thus the distance from $d$ to $0$ is at most $(\ell/q)d'$, and since $1 \leq d \leq N/2$, we get $d \leq \varepsilon N/5 \leq M/4$ as $\varepsilon>0$ is sufficiently small. Applying the second part of Lemma \ref{simplemma3} to $P$, we obtain $|P \cap I|/|P| \geq |I|/(4N) \geq \varepsilon/4$. 

In any case, $T=I \cup \ell \cdot I$ contains at least a $\varepsilon/4$-fraction of $P$. \end{proof}

To prove Lemma~\ref{cyclicdense}, we effectively prove the $m=1$ case and use it to bootstrap to the general case. This bootstrapping uses the following lemma, which uses a simple product construction. 

\begin{lem}\label{crtproduct}
For $i \in [m]$, suppose $N_i$ is a positive integer with no prime factor less than $k$ for which there is $S_i \subseteq \mathbb{Z}_{N_i}$ which has density at most $\varepsilon$ and contains at least $\delta k$ elements from every $k$-AP in $\mathbb{Z}_{N_i}$. Then there is a subset $S \subset \mathbb{Z}_{N_1} \times \cdots \times \mathbb{Z}_{N_m}$ which has density at most $m\varepsilon$ and contains at least $\delta k$ elements in every $k$-AP in $\mathbb{Z}_{N_1} \times \cdots \times \mathbb{Z}_{N_m}$. 
\end{lem}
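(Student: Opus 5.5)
Take $S$ to be the union of the preimages of the $S_i$ under the coordinate projections:
\[
S:=\bigcup_{i=1}^m \pi_i^{-1}(S_i), \qquad \pi_i:\mathbb{Z}_{N_1}\times\cdots\times\mathbb{Z}_{N_m}\to \mathbb{Z}_{N_i}.
\]
The density bound is a union bound. Each $\pi_i^{-1}(S_i)$ has the same density in the product as $S_i$ has in $\mathbb{Z}_{N_i}$, namely at most $\varepsilon$. Hence $S$ has density at most $m\varepsilon$.

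For the hitting property, fix a $k$-AP $P=(a,a+d,\ldots,a+(k-1)d)$ in the product with $d\neq 0$. Pick a coordinate $i$ with $d_i:=\pi_i(d)\neq 0$. The projected sequence $\pi_i(P)=(a_i+jd_i)_{0\le j\le k-1}$ is then a $k$-AP in $\mathbb{Z}_{N_i}$ with nonzero common difference.

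The step needing care is that this projection could fail to consist of $k$ distinct elements, in which case "contains at least $\delta k$ elements" must be interpreted properly. Here the hypothesis that $N_i$ has no prime factor less than $k$ is used. Suppose $jd_i\equiv j'd_i \pmod{N_i}$ with $0\le j<j'\le k-1$. Then $N_i\mid (j'-j)d_i$. Since $0<j'-j<k$ and every prime factor of $N_i$ is at least $k$, we get $\gcd(j'-j,N_i)=1$. So $N_i\mid d_i$, contradicting $d_i\neq0$. Thus the $k$ terms of $\pi_i(P)$ are distinct, and $\pi_i(P)$ is a genuine $k$-AP whose terms correspond bijectively to those of $P$.

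By hypothesis, at least $\delta k$ of the terms $a_i+jd_i$ lie in $S_i$. For each such $j$, the term $a+jd$ lies in $\pi_i^{-1}(S_i)\subseteq S$. Distinct $j$ give distinct terms of $P$, since their projections are already distinct. Hence $P$ contains at least $\delta k$ elements of $S$, which finishes the proof. The only subtle point is the distinctness argument, and the lower bound on prime factors makes it immediate.
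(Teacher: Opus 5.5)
Your proposal is correct and follows the paper's proof: the same set $S$ (tuples with at least one coordinate in the corresponding $S_i$), and the same projection onto a coordinate $i$ with $d_i\neq 0$. The differences are cosmetic. The paper bounds the density through the complement, $\prod_{i}(1-|S_i|/N_i)\ge(1-\varepsilon)^m\ge 1-m\varepsilon$, instead of a union bound. It also leaves implicit the distinctness check, which uses that $N_i$ has no prime factor less than $k$ and which you spell out correctly.
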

\begin{proof}
Let $S:=\{(a_1,\ldots,a_m) \in \mathbb{Z}_{N_1} \times \cdots \times \mathbb{Z}_{N_m}: a_i \in S_i~\textrm{for at least one}~i \in [m]\}$. The complement of $S$ has density $\prod_{i=1}^m \left(1-\frac{|S_i|}{N_i}\right) \geq (1-\varepsilon)^m \geq 1-m\varepsilon$, so $S$ has density at most $m\varepsilon$. 

Consider a $k$-AP $P$ in $\mathbb{Z}_{N_1} \times \cdots \times \mathbb{Z}_{N_m}$, and let $d=(d_1,\ldots,d_m)$ be its common difference, so $d_i \not = 0$ for some $i \in [m]$. The arithmetic progression projected to coordinate $i$ is a $k$-AP $P_i$ for which $S_i$ contains at least $\delta k$ of its elements. But then $S$ contains at least $\delta k$ elements of $P$. 
\end{proof}

\begin{proof}[Proof of Lemma~\ref{cyclicdense}]
Let $N=\prod_{i=1}^m N_i$, where $N_1,\ldots,N_m$ are distinct prime numbers in $[k,2.5^k]$. Note that, for each $i \in [m]$, as $N_i \geq k$ and $N_i$ is prime, $N_i$ has no prime factor less than $k$. 

For $i \in [m]$, apply Lemma~\ref{intervaldense} with $\varepsilon/(2m)$ in place of $\varepsilon$ and $N_i$ in place of $N$ to obtain that there is a subset $S_i$ of $[N_i]$ with density at most $\varepsilon/(2m)$ such that $S_i$ has density at least $\delta=2^{-400/(\varepsilon/2m)}=2^{-800m/\varepsilon}$ in any $k$-AP in $[N_i]$. By Lemma~\ref{intervaltocyclic} with $\varepsilon/(8m)$ in place of $\varepsilon$, there is a subset $T_i \subseteq \mathbb{Z}_{N_i}$ with density at most $\varepsilon/(2m)$ that has density at least $\varepsilon/(32m)$ in any $k$-AP in $\mathbb{Z}_{N_i}$ that wraps around. Note that $\delta \leq \varepsilon/(32m)$. The union $S_i \cup T_i$ (where $S_i$ is taken modulo $N_i$) has density at most $\varepsilon/m$ and has density at least $\delta$ in any $k$-term arithmetic progression in $\mathbb{Z}_{N_i}$.  

As $N_1,\ldots,N_m$ are distinct primes, they are pairwise relatively prime and the Chinese remainder theorem tells us that $\mathbb{Z}_N \cong  \mathbb{Z}_{N_1} \times \cdots \times \mathbb{Z}_{N_m}$. By Lemma \ref{crtproduct}, there is a subset $S \subset \mathbb{Z}_N$ that has density at most $m(\varepsilon/m)=\varepsilon$ and contains at least $\delta k$ elements in every $k$-AP in $\mathbb{Z}_N$.  
\end{proof}

\section{Lov\'asz local lemma consequences}
\label{sectionlll}

We first recall the symmetric version of the Lov\'asz local lemma \cite{Spencer77}. 

\begin{lem}[Symmetric version of the Lov\'asz local lemma] Let $B_1,B_2,\ldots,B_n$ be a sequence of ``bad'' events such that each event occurs with probability at most $p$ and such that each event is independent of all the other events except for at most $d$ of them. If $ep(d+1) \leq 1$, then there is a positive probability that none of the events occur. 
\end{lem}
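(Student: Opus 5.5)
This is the classical symmetric Lovász local lemma, and I would derive it from the general (asymmetric) version by induction. The key claim is as follows. For any event $B_i$ and any set $J$ of indices not containing $i$ with $\Pr[\bigcap_{j\in J}\overline{B_j}]>0$, we have
\[
\Pr\Bigl[B_i \,\Big|\, \bigcap_{j\in J}\overline{B_j}\Bigr]\le \frac{1}{d+1}.
\]
Granting this, the chain rule gives
\[
\Pr\Bigl[\bigcap_{i=1}^n \overline{B_i}\Bigr]=\prod_{i=1}^n \Pr\Bigl[\overline{B_i}\,\Big|\,\bigcap_{j<i}\overline{B_j}\Bigr]\ge \Bigl(1-\frac{1}{d+1}\Bigr)^n>0.
\]
The positivity of each conditioning event is established inductively along the way. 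The case $d=0$ is trivial, since then the events are mutually independent and each has probability at most $p\le 1/e<1$. So I assume $d\ge 1$.

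I would prove the claim by induction on $|J|$. I read the hypothesis ``independent of all the other events except for at most $d$ of them'' in the standard way: there is a set $\Gamma(i)$ of at most $d$ indices such that $B_i$ is mutually independent of the family $\{B_j: j\notin \Gamma(i)\cup\{i\}\}$. Split $J=J_1\cup J_2$ with $J_1=J\cap\Gamma(i)$ and $J_2=J\setminus\Gamma(i)$. If $J_1=\emptyset$, then by mutual independence the conditional probability equals $\Pr[B_i]\le p\le 1/(e(d+1))\le 1/(d+1)$. Otherwise, write
\[
\Pr\Bigl[B_i\,\Big|\,\bigcap_{J}\overline{B_j}\Bigr]=\frac{\Pr\bigl[B_i\cap\bigcap_{J_1}\overline{B_j}\,\big|\,\bigcap_{J_2}\overline{B_j}\bigr]}{\Pr\bigl[\bigcap_{J_1}\overline{B_j}\,\big|\,\bigcap_{J_2}\overline{B_j}\bigr]}.
\]
The numerator is at most $\Pr[B_i\mid \bigcap_{J_2}\overline{B_j}]=\Pr[B_i]\le p$, again by mutual independence.

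For the denominator, list $J_1=\{j_1,\dots,j_r\}$ with $r\le d$. Apply the chain rule, using the induction hypothesis for each factor $\Pr[B_{j_s}\mid \bigcap_{J_2}\overline{B_j}\cap\overline{B_{j_1}}\cap\dots\cap\overline{B_{j_{s-1}}}]\le 1/(d+1)$. This is valid because each such conditioning set is a proper subset of $J$ not containing $j_s$. Hence the denominator is at least
\[
\Bigl(1-\frac1{d+1}\Bigr)^{d}\ge \frac1e,
\]
which also shows that the relevant conditioning events have positive probability. The ratio is therefore at most $ep\le 1/(d+1)$, closing the induction.

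The only delicate point is the bookkeeping in the induction. The hypothesis must be phrased for arbitrary conditioning sets $J$, not merely initial segments, so that it applies to the mixed sets $J_2\cup\{j_1,\dots,j_{s-1}\}$. Positivity of the conditioning probabilities must be carried along inside the same induction. The numerical step $(1-1/(d+1))^d\ge 1/e$ is routine.
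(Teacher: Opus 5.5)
Your proof is correct. It is the standard inductive proof of the symmetric local lemma: you show $\Pr[B_i\mid\bigcap_{j\in J}\overline{B_j}]\le 1/(d+1)$ by splitting $J$ along the dependency set $\Gamma(i)$, bound the numerator by $p$ and the denominator by $(1-\frac{1}{d+1})^d\ge 1/e$, and you read the hypothesis as mutual independence from the non-neighbours, which is exactly what the paper uses in Lemma~\ref{lllconsequence}. The paper gives no proof of its own and simply cites \cite{Spencer77}, so you have supplied the textbook argument behind that citation; note that positivity of the intermediate conditioning events is immediate, since each is a superset of $\bigcap_{j\in J}\overline{B_j}$.
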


The following lemma is a simple consequence of the Lov\'asz local lemma. 

\begin{lem}\label{lllconsequence}
Let $H$ be a $k$-uniform hypergraph with maximum degree $\Delta$, and  $r > s$ be positive integers. Let $0<\delta <1/2$. If $\Delta \leq \frac{1}{ekr}\left(\left(\frac{s\delta}{er}\right)^{\delta}r/s\right)^{k}$, then there is a way to assign a subset of $s$ colors from a set of $r$ colors to each vertex of $H$ such that for every edge $f$ and every color $i$, there are at most $(1-\delta)k$ vertices of $f$ whose color set contains $i$. 
\end{lem}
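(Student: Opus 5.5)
We assign to each vertex of $H$ an $s$-subset of $[r]$ chosen independently and uniformly at random, and we apply the symmetric Lov\'asz local lemma stated above. For each edge $f$ and each color $i\in[r]$, let $B_{f,i}$ be the bad event that more than $(1-\delta)k$ vertices of $f$ have color sets containing $i$. It is convenient to group these events by edge: let $B_f=\bigcup_i B_{f,i}$. Then $B_f$ is determined by the choices on the $k$ vertices of $f$. It is therefore mutually independent of all $B_{f'}$ with $f'\cap f=\emptyset$. The number of edges meeting $f$ is at most $k\Delta$, so we may take $d=k\Delta$.

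To bound $\Pr[B_{f,i}]$, note that if $B_{f,i}$ holds then there is a set $U\subseteq f$ of size $\lceil \delta k\rceil$ (or slightly less) whose complement consists entirely of vertices whose set contains $i$. More cleanly: the bad event says that at most $\delta k$ vertices of $f$ \emph{avoid} color $i$. Each vertex contains $i$ independently with probability $s/r$. Hence
\[
\Pr[B_{f,i}]\le \binom{k}{\lfloor\delta k\rfloor}\left(\frac{s}{r}\right)^{(1-\delta)k}\le \left(\frac{e}{\delta}\right)^{\delta k}\left(\frac{s}{r}\right)^{(1-\delta)k}
=\left(\left(\frac{er}{s\delta}\right)^{\delta}\frac{s}{r}\right)^{k}.
\]
Here we used that the number of ways to choose which at most $\delta k$ vertices avoid $i$ is at most $\sum_{j\le\delta k}\binom kj\le (e/\delta)^{\delta k}$, valid since $\delta<1/2$. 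For the remaining $\ge(1-\delta)k$ vertices we bound the probability that all contain $i$ by $(s/r)^{(1-\delta)k}$. A union bound over the $r$ colors gives $\Pr[B_f]\le p:=r\left(\left(\frac{er}{s\delta}\right)^{\delta}\frac{s}{r}\right)^{k}$.

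Finally we check the local lemma condition $ep(d+1)\le 1$. Since $d+1\le k\Delta+1\le 2k\Delta$ (or directly, when $\Delta\ge 1$), it suffices that $\Delta\le \frac{1}{ekr}\left(\left(\frac{s\delta}{er}\right)^{\delta}\frac rs\right)^{k}$, up to the harmless factor of $2$. This is exactly the hypothesis, with the reciprocal of the bound on $p$ appearing. The factor-$2$ slack can be absorbed: the sum $\sum_{j\le \delta k}\binom kj$ is in fact at most $(e/\delta)^{\delta k}$ with room to spare, or one can define $B_f$ only for edges and use $d\le k(\Delta-1)$ so that $d+1\le k\Delta$. This bookkeeping of constants is the only delicate point. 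The probabilistic estimate itself is routine. With positive probability no $B_f$ occurs, which gives the desired assignment.
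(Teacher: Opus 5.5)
Your proof is correct and is essentially the paper's argument: uniformly random $s$-subsets, a union bound over the $r$ colors and over $\lfloor \delta k\rfloor$-sets of exceptional vertices giving $p=r\bigl(\bigl(\tfrac{er}{s\delta}\bigr)^{\delta}\tfrac{s}{r}\bigr)^{k}$, and the symmetric local lemma with $d=k(\Delta-1)$, so that $d+1\le k\Delta$ and the hypothesis is exactly $ep(d+1)\le 1$. State $d\le k(\Delta-1)$ from the start and drop the alternative ``room to spare'' justification, which fails in general: when $\delta k<1$ the sum equals $1$, while $(e/\delta)^{\delta k}$ can be arbitrarily close to $1$, leaving no factor of $2$.
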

\begin{proof}
Consider a uniform random coloring of the vertices of $H$ by subsets of size $s$ of the set of $r$ colors. For any fixed color $i$, there is a probability $s/r$ of $i$ being in the color set of a given vertex, and each of these events are independent. For each edge $f$ of $H$, let $B_f$ be the ``bad'' event that $f$ has at least $(1-\delta)k$ vertices of one color. Let $t= \lfloor \delta k \rfloor$. Note that for $B_f$ to occur, there has to be $k-t$ vertices of $f$ whose color sets contain a common color. By the union bound, the probability of event $B_f$ is at most $$p:=r{k \choose t}(s/r)^{(1-\delta)k} < r(ek/t)^t(s/r)^{(1-\delta)k} \leq r\left(\left(\frac{er}{s\delta}\right)^{\delta}s/r\right)^{k}.$$ 

Event $B_f$ is independent of all but at most $d=k(\Delta-1)$ other bad events $B_{f'}$. By the symmetric version of the Lov\'asz local lemma \cite{Spencer77}, as $ep(d+1) \leq 1$, there is a positive probability that none of the bad events occur. Hence, there is a choice of the coloring which has the desired properties. 
\end{proof}

\begin{cor}\label{lllcor} Let $k,r,s \in \mathbb{N}$ with $r>s$. Let $0<\delta <1/2$. Let $G$ be an abelian group of order $N$ such that $N \leq \frac{1}{ek^2r}\left(\left(\delta/e\right)^{\delta}\left(r/s\right)^{1-\delta}\right)^{k}$ and $N$ has no prime factor less than $k$. Then there is a way to assign a subset of $s$ colors from a set of $r$ colors to each element of $G$ such that for every $k$-AP of $G$ and every color $i$, there are at most $(1-\delta)k$ elements of the $k$-AP whose color set contains $i$. 
\end{cor}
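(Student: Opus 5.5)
We apply Lemma~\ref{lllconsequence} to the $k$-uniform hypergraph $H$ on vertex set $G$ whose edges are the (sets of terms of) $k$-APs in $G$. Two points need checking. First, every $k$-AP in $G$ really has $k$ distinct terms, so it gives a genuine $k$-edge. Second, the maximum degree $\Delta$ of $H$ satisfies the hypothesis of Lemma~\ref{lllconsequence}. Once both hold, the conclusion of Lemma~\ref{lllconsequence} is exactly the desired statement: no $k$-AP has more than $(1-\delta)k$ elements whose color set contains a fixed color $i$.

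For distinctness, take a $k$-AP $a,a+d,\ldots,a+(k-1)d$ with $d\neq 0$. If $a+jd=a+j'd$ for some $0\le j<j'\le k-1$, then $(j'-j)d=0$, so the order of $d$ divides $j'-j$, which is less than $k$. The order of $d$ is greater than $1$ and divides $N$. Since $N$ has no prime factor less than $k$, every divisor of $N$ exceeding $1$ is at least $k$, a contradiction. So the terms are distinct. The same observation shows the multiplicities are harmless even if one prefers to count APs rather than their underlying sets.

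For the degree bound, fix $x\in G$. A $k$-AP containing $x$ is determined by the position $j\in\{0,\ldots,k-1\}$ that $x$ occupies and by the common difference $d\in G\setminus\{0\}$. Hence at most $kN$ APs contain $x$, and so $\Delta\le kN$; taking distinct edges only decreases this. It then suffices to have
\[kN\le \frac{1}{ekr}\left(\left(\frac{s\delta}{er}\right)^{\delta}\frac{r}{s}\right)^{k},\]
that is,
\[N\le \frac{1}{ek^2r}\left(\left(\frac{\delta}{e}\right)^{\delta}\left(\frac{r}{s}\right)^{1-\delta}\right)^{k}.\]
This follows by rewriting $(s\delta/(er))^{\delta}(r/s)=(\delta/e)^{\delta}(r/s)^{1-\delta}$, and it is precisely the assumed bound on $N$. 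The remaining hypotheses $r>s$ and $0<\delta<1/2$ are carried over directly.

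There is no real obstacle. The only subtle point is the role of the condition that $N$ has no prime factor less than $k$: it ensures that APs are genuine $k$-sets, so that $H$ is $k$-uniform and Lemma~\ref{lllconsequence} applies.
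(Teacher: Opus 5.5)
Your proposal is correct and matches the paper's proof. You form the $k$-uniform hypergraph of $k$-APs (uniform because $N$ has no prime factor less than $k$), bound $\Delta\le kN$ by the position of $x$ and the common difference, and apply Lemma~\ref{lllconsequence} using the identity $(s\delta/(er))^{\delta}(r/s)=(\delta/e)^{\delta}(r/s)^{1-\delta}$; your explicit order-of-$d$ argument just spells out the distinctness step the paper states in one line.
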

\begin{proof}
Consider the $k$-uniform hypergraph $H$ with vertex set $G$ whose edges are the $k$-APs of $G$. The edges are $k$-uniform since $N$ has no prime factor less than $k$. Each element can occupy at most $k$ positions for a $k$-AP, and there are less than $N$ common differences, so the maximum degree of $H$ satisfies $\Delta <kN$. By Lemma \ref{lllconsequence}, the desired coloring exists. 
\end{proof}

We have the following corollary in the case $r=2$ and $s=1$. In this case, we get such a coloring as long as $N \leq 2^{(1-\delta-o(1))k}$. 

\begin{cor}\label{lllcor2}
Let $G$ be an abelian group of order $N \leq 2^{(1-\delta-o(1))k}$ such that $N$ has no prime factor less than $k$. Here  $o(1) \rightarrow 0$ for $\delta>0$ fixed and $k \to \infty$. Then there is a two-coloring of $G$ such that any $k$-AP has at most $(1-\delta)k$ elements of the same color. 
\end{cor}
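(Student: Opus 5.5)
The plan is to specialize Corollary~\ref{lllcor} to $r=2$ and $s=1$, as the sentence before the statement indicates. With $s=1$, assigning a $1$-element subset of $\{1,2\}$ to each element of $G$ is the same as giving a two-coloring of $G$. The conclusion of Corollary~\ref{lllcor} then says that every $k$-AP of $G$ has at most $(1-\delta)k$ elements of each color. That is exactly the property required. The hypothesis that $N$ has no prime factor less than $k$ carries over verbatim; it is what makes the $k$-AP hypergraph genuinely $k$-uniform. So the proof reduces to checking that the numerical hypothesis of Corollary~\ref{lllcor} holds whenever $N \le 2^{(1-\delta-o(1))k}$.

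Substituting $r=2$ and $s=1$, the admissible range from Corollary~\ref{lllcor} is
\[
N \le \frac{1}{2ek^2}\left((\delta/e)^{\delta}\, 2^{1-\delta}\right)^{k}.
\]
Taking $\log_2$ and dividing by $k$, the exponent of $2$ on the right is
\[
(1-\delta) - \delta\log_2(e/\delta) - \frac{\log_2(2ek^2)}{k}.
\]
With $\delta$ fixed, the last term is $O(\log k / k)=o(1)$ as $k\to\infty$. It is therefore absorbed into the $o(1)$ of the statement.

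The remaining term, $\delta\log_2(e/\delta)$, is the main obstacle. It comes from the factor $\binom{k}{t}\le (ek/t)^t$ in the union bound inside Lemma~\ref{lllconsequence}, and for fixed $\delta$ it does not tend to $0$ with $k$. The first thing I would try is to see whether it can be avoided. One option is to rerun the local lemma directly, with bad event $B_f$ being that some color is missing from more than $(1-\delta)k$ positions, and to bound $\Pr[B_f]$ by a sharper binomial tail instead of the crude estimate $(ek/t)^t$. Another is to apply Corollary~\ref{lllcor} with a parameter other than $\delta$.

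I expect neither to succeed. The exact tail $\Pr[\mathrm{Bin}(k,1/2)\ge (1-\delta)k]$ is $2^{-(1-H(\delta)+o(1))k}$, and $H(\delta)>\delta$ for $0<\delta<1/2$. Any local-lemma argument of this type therefore gives the threshold $2^{(1-H(\delta)-o(1))k}$, not $2^{(1-\delta-o(1))k}$. Using a different parameter $\delta'$ does not help either: we need $\delta'\ge\delta$, and this only makes the exponent smaller.

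So the write-up would carry out the specialization and the computation above. It would state that this yields the bound with $(1-\delta-o(1))$ replaced by $(1-\delta-\delta\log_2(e/\delta)-o(1))$. This agrees with the statement as worded only once the $\delta$-dependent term is absorbed into the error. That is legitimate when the corollary is applied with $\delta$ small, so that $\delta\log_2(e/\delta)\to0$, but not for $\delta$ fixed.

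I would flag this discrepancy explicitly rather than hide it. The honest outcome of the plan is the threshold $2^{(1-\delta-\delta\log_2(e/\delta)-o(1))k}$ for fixed $\delta$. I do not see a route to the literal bound $2^{(1-\delta-o(1))k}$ for fixed $\delta$ from the tools stated so far.
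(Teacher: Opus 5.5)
Your plan is the paper's own argument: the paper's only justification is the sentence before the statement, which specializes Corollary~\ref{lllcor} to $r=2$, $s=1$. Your computation of what that specialization gives is correct, and the discrepancy you flag is genuine. With $r=2$, $s=1$ the admissible range is
\[
N\le \frac{1}{2ek^2}\Bigl((\delta/e)^{\delta}\,2^{1-\delta}\Bigr)^{k}=2^{(1-\delta-\delta\log_2(e/\delta)-o(1))k}.
\]
The paper silently drops the term $\delta\log_2(e/\delta)$, which comes from the estimate $\binom{k}{t}\le (ek/t)^t$ in Lemma~\ref{lllconsequence}.

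So the paper does not establish the corollary as literally worded, where the error term depends only on $k$ for fixed $\delta$. What it does establish is your version. Equivalently, it establishes the stated bound if the error term is allowed to vanish only as $\delta\to 0$ and $k\to\infty$. The restriction $\delta<1/2$ is also implicit, as in Lemma~\ref{lllconsequence}; for $\delta>1/2$ no two-coloring can satisfy the conclusion once $k$-APs exist.

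Your side remarks are also right. The exact binomial tail only improves the threshold to $2^{(1-H(\delta)-o(1))k}$, and $H(\delta)>\delta$ on $(0,1/2)$. Replacing $\delta$ by a larger parameter only lowers the exponent. You were also right to say only that the available tools do not give the literal bound, not that the bound is false.

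The slip does not affect the main results. In Theorem~\ref{firstkey} the two-coloring is invoked only with $\delta=\delta_j$ doubly exponentially small. There $\delta_j\log_2(e/\delta_j)$ is negligible and the exponent still comfortably exceeds $0.9$.

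It does undercut the paper's ``in particular'' sentence after the statement. For $\delta=0.05$, the specialization gives only about $2^{0.66k}$, and even the exact tail gives only about $2^{0.71k}$, not $2^{0.9k+1}$. Your proposed write-up, which states the bound with exponent $1-\delta-\delta\log_2(e/\delta)-o(1)$ and flags the difference explicitly, is the correct treatment.
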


In particular, as long as $N \leq 2^{0.9k+1}$ has no prime factors less than $k$ and $k$ is sufficiently large, then there is a two-coloring of any abelian group $G$ of order $N$ such that any $k$-AP has at most $0.95k$ elements of the same color.

\section{Random Shifted Product Constructions}
\label{sectionrspc}
The next lemma is a basic product coloring lemma. From colorings of smaller groups without a monochromatic $k$-AP, we obtain a coloring of their product group without a monochromatic $k$-AP. 

\begin{lem}\label{simpleproductcoloring} Let $k$ be a positive integer and $H_1,H_2$ be finite abelian groups. Let $G=H_1 \times H_2$. Suppose for $i=1,2$ that $c_i:H_i \to [r_i]$ is an $r_i$-coloring of $H_i$ without monochromatic $k$-APs. Then the product coloring $c_1 \times c_2:H_1 \times H_2 \to [r_1] \times [r_2]$ also has no monochromatic $k$-APs. 
\end{lem}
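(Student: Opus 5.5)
The plan is a direct projection argument. Suppose for contradiction that $P=(a,a+d,\ldots,a+(k-1)d)$ is a $k$-AP in $G=H_1\times H_2$ that is monochromatic under $c_1\times c_2$. Write $a=(a_1,a_2)$ and $d=(d_1,d_2)$. Since $d\neq 0$ in $G$, at least one coordinate $d_i$ is nonzero in $H_i$. Fix such an $i$.

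Next I would project $P$ to the $i$-th coordinate. This gives the sequence $a_i,a_i+d_i,\ldots,a_i+(k-1)d_i$ in $H_i$. Since $d_i\neq 0$, this sequence is a $k$-AP in $H_i$ by the definition used in the paper, which only requires a nonzero common difference.

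Monochromaticity of $P$ means that $(c_1\times c_2)(a+jd)=(c_1(a_1+jd_1),c_2(a_2+jd_2))$ is the same pair for all $j\in\{0,\ldots,k-1\}$. In particular the $i$-th component $c_i(a_i+jd_i)$ is constant in $j$. So the projected $k$-AP is monochromatic under $c_i$, contradicting the hypothesis on $c_i$.

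There is essentially no obstacle. The only point to check is that the projection is a genuine $k$-AP, i.e.\ that we choose a coordinate where the difference is nonzero. The paper's convention counts a progression as a $k$-AP whenever $d\neq 0$, even if terms repeat. Under this convention no divisibility or order assumptions on $H_1,H_2$ are needed, and repeated terms cause no trouble.
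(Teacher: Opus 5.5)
Your proof is correct and is essentially the same as the paper's. You project onto a coordinate $i$ where the common difference $d_i$ is nonzero, obtaining a $k$-AP in $H_i$ that would be monochromatic under $c_i$, which is a contradiction.
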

\begin{proof}
The product coloring $c_1 \times c_2: H_1 \times H_2 \rightarrow [r_1] \times [r_2]$ assigns to the pair $(h_1,h_2) \in H_1 \times H_2$ the color $(c_1(h_1),c_2(h_2))$. As a $k$-term arithmetic progression has nonzero common difference, a $k$-term arithmetic progression in $H_1 \times H_2$ also satisfies for at least one $i \in \{1,2\}$ that when restricted to coordinate $i$ it is also a $k$-AP (so has nonzero common difference). As neither $c_1$ nor $c_2$ has a monochromatic $k$-AP, so does $c_1 \times c_2$. 
\end{proof}

While in Lemma~\ref{simpleproductcoloring} the size of the group we are coloring is the product of the sizes of the smaller groups, the number of colors also multiplies. This is a significant drawback, and we would prefer if the number of colors does not increase or at least not by much. This is where random shifted product constructions come to the rescue. 

As input, it is given for $i \in \{1,2\}$ an $r$-coloring $c_i$ of an abelian group $H_i$ that is far from containing a monochromatic $k$-AP and has a dense color class. It outputs an $r$-coloring $c$ of the product group $H_1 \times H_2$ that is also far from containing a monochromatic $k$-AP and has a dense color, but with weaker parameters. 

\begin{lem}\label{productlemma}
Let $r$ and $k$ be positive integers, $0<\varepsilon_1,\varepsilon_2,\delta_1,\delta_2 \leq 1$, and $H_1,H_2$ be finite abelian groups. Let $G=H_1 \times H_2$. Suppose for $i=1,2$ that $c_i:H_i \to [r]$ is an $r$-coloring of $H_i$ such that 
\begin{enumerate} 
\item For each $k$-AP $A$ in $H_i$ and every $j \in [r]$, there are at least $\delta_i k$ distinct elements of $A$ that are not of color $j$ in $c_i$, and   
\item The density of color $r$ in coloring $c_i$ is at least $(1-\varepsilon_i)$. 
\end{enumerate}
Let $\varepsilon = \varepsilon_1+\varepsilon_2-\varepsilon_1\varepsilon_2$ and $\delta=\min(\delta_1/3,\delta_2)$.  If $\varepsilon_2^{-2\delta_1 k/3}>r2^k|G|^2$, then there is a coloring $c:G \rightarrow [r]$ such that color $r$ has density at least $(1-\varepsilon)$, and each $k$-AP has at least $\delta k$ distinct elements not of any given color.
\end{lem}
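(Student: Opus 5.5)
The plan is a random shifted product: keep the coloring of $H_1$ as the coarse layer and let randomly shifted copies of $c_2$ fill in the fibers. Choose independent uniform shifts $x_{h_1}\in H_2$, one for each $h_1\in H_1$, and define
\[
c(h_1,h_2)=\begin{cases} c_2(h_2+x_{h_1}) & \text{if } c_2(h_2+x_{h_1})\neq r,\\ c_1(h_1) & \text{otherwise.}\end{cases}
\]
Then $c(h_1,h_2)=r$ exactly when $c_2(h_2+x_{h_1})=r$ and $c_1(h_1)=r$. For each fixed $h_1$ the first event has probability at least $1-\varepsilon_2$, and it does not depend on which $h_1$ we are in. Averaging over $h_1$, the density of color $r$ is at least $(1-\varepsilon_1)(1-\varepsilon_2)=1-\varepsilon$. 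I will make this hold deterministically, not just in expectation, by noting that each fiber $\{h_1\}\times H_2$ is colored by a translate of $c_2$, so its count of $r$'s under $c_2$ is fixed.

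For the progression condition, fix a $k$-AP $A$ with common difference $(d_1,d_2)$ and a color $j$.

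\emph{Case $d_1\neq 0$.} The projection of $A$ to $H_1$ is a $k$-AP with distinct first coordinates, since the elements of $A$ are distinct and $d_1\ne 0$. By hypothesis 1 for $c_1$, at least $\delta_1k$ terms $(h_1,h_2)\in A$ have $c_1(h_1)\neq j$. Each such term receives color $c_1(h_1)\ne j$ unless its shifted $c_2$-value is non-$r$, which happens with probability at most $\varepsilon_2$. These events are independent because the $h_1$'s are distinct and the shifts are independent.

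So if fewer than $\delta_1 k/3$ of these terms are non-$j$ under $c$, then some set of at least $2\delta_1k/3$ of them all land in the sparse part. For a fixed choice of that set, this has probability at most $\varepsilon_2^{2\delta_1k/3}$; there are at most $2^k$ choices of the set. Taking a union bound over at most $|G|^2$ progressions, $r$ colors and $2^k$ subsets gives total failure probability at most
\[
r\,2^k|G|^2\,\varepsilon_2^{2\delta_1 k/3}<1,
\]
which is exactly the hypothesis. This produces the $\delta_1/3$ in $\delta$.

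\emph{Case $d_1=0$.} Now $A$ lies in a single fiber $\{h_1\}\times H_2$, where $c$ is a fixed translate of $c_2$ except that the $c_2$-color-$r$ positions are recolored to $a:=c_1(h_1)$. For $j\neq a$, every term with $c_2$-value $\neq j$ still has $c$-value $\neq j$: either it keeps its value, or its $c_2$-value is $r$ and it becomes $a\neq j$. Hypothesis 1 for $c_2$ then gives at least $\delta_2k$ non-$j$ terms. This is where $\delta_2$ enters $\delta$.

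The main obstacle is the subcase $j=a\neq r$. Here the $c$-color-$a$ terms are those with $c_2$-value in $\{a,r\}$, and hypothesis 1 alone does not bound them. I would first try to remove this subcase by modifying the fill-in: replace the constant $c_1(h_1)$ on $r$-positions by $c_1(h_1+y_{h_2})$ with additional random shifts $y_{h_2}\in H_1$. Within a fiber these values vary with $h_2$, while the density computation and the $d_1\neq0$ union bound are unaffected. If that becomes awkward, I would instead check whether the statement's intended scheme puts the sparse $c_2$-colors on top and the $c_1$-colors underneath in the opposite roles. In that version the fiber case is automatically a translate of a good coloring, and the $\varepsilon_2^{2\delta_1k/3}$ union bound is applied to the transversal direction. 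I would settle which orientation makes both cases close, then finish with the union bound above.
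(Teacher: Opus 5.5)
There is a genuine gap, and it is exactly where you flagged it: your fill-in is not a permutation of colors. On a fiber over $h_1$ with $a=c_1(h_1)\neq r$, your $c$ is a translate of $c_2$ in which the colors $a$ and $r$ have been merged into $a$. Hypothesis 1 for $c_2$ gives no control over progressions whose $c_2$-colors all lie in $\{a,r\}$. So the case $d_1=0$, $j=a$ is a defect of the construction itself, not just of the write-up.

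The missing idea, which is what the paper uses, is to recolor each fiber by a \emph{bijection} of $[r]$. Send $c_2$-color $r$ to $c_1(h_1)$, and send the other $r-1$ colors of $c_2$ bijectively onto $[r]\setminus\{c_1(h_1)\}$; for instance, just transpose $r$ and $c_1(h_1)$. Each fiber is then a translate of $c_2$ up to a permutation of colors, and all three parts of the argument go through:
\begin{enumerate}
\item The case $d_1=0$ follows at once from hypothesis 1 for $c_2$.
\item Your lower bound on the density of color $r$ is unchanged, since the permutation fixes $r$ on fibers with $c_1(h_1)=r$.
\item Your $d_1\neq 0$ argument goes through verbatim. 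A term over $h_1$ with $c_1(h_1)\neq j$ can receive color $j$ only from a non-$r$ color of the shifted $c_2$, because $r$ is sent to $c_1(h_1)\neq j$. That event has probability at most $\varepsilon_2$.
\end{enumerate}

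Neither of your proposed repairs closes the gap.

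Putting $c_1(h_1+y_{h_2})$ on the $r$-positions does \emph{not} leave the $d_1\neq0$ case unaffected. The color at an $r$-position is no longer determined by the projection of $A$ to $H_1$, so hypothesis 1 for $c_1$ can no longer be invoked. Take a progression with $d_1,d_2\neq 0$ whose projections to both factors have distinct terms. The $2k$ shifts involved are then independent, and each term is color $r$ independently with probability $\mu_1(r)\mu_2(r)\ge 1-\varepsilon$, where $\mu_i(r)$ is the density of color $r$ in $c_i$. So the failure probability per progression is at least $(1-\varepsilon)^k$ rather than $\varepsilon_2^{\Omega(\delta_1 k)}$. The density of color $r$ also stops being deterministic.

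Swapping the roles of $H_1$ and $H_2$ does not help either. The fiber case would then use $c_1$, and the transversal case would combine hypothesis 1 for $c_2$ with the sparsity of the non-$r$ colors of $c_1$. That leads to a condition in $\varepsilon_1$ and $\delta_2$, not the given $\varepsilon_2^{-2\delta_1 k/3}>r2^k|G|^2$. Your original orientation is already the right one; only the fiber recoloring needs to change.

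One smaller correction: $d_1\neq 0$ does not by itself make the first coordinates of $A$ distinct, since the order of $d_1$ may be below $k$. Instead, take $\lceil\delta_1 k\rceil$ distinct elements of the projection that are not of color $j$, and pick one preimage in $A$ of each. These preimages lie in distinct fibers, which is all the independence argument needs.
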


\begin{proof}
We first describe how the random shifted product construction is defined. We color the fibers of the projection onto the second coordinate independently. So fix $x \in H_1$ and let $y_x$ be a uniformly random element of $H_2$. We color the fiber $x \times H_2$ according to the random shift by $y_x$ of $c_2$ with the colors permuted. Namely, for each $y \in H_2$ with $c_2(y)=r$, in coloring $c$ we assign $(x,y+y_x)$ the color $c_1(x)$. For each $i \in [r-1]$, in coloring $c$, all elements $(x,y+y_x)$ with $c_2(y)=i$ are assigned one of the $r-1$ colors in $[r] \setminus \{c_1(x)\}$ so that each element of $[r] \setminus \{c_1(x)\}$ is the color of exactly one of the sets $\{(x,y+y_x):c_2(y)=i\}$. 

We first show that color $r$ in $c$ always has density at least $(1-\varepsilon)=(1-\varepsilon_1)(1-\varepsilon_2)$. This follows as for each $x \in H_1$ for which $c_1(x)=r$, color $r$ in the fiber $x \times H_2$ has density at least $(1-\varepsilon_2)$, and the density of such $x \in H_1$ is at least $(1-\varepsilon_1)$. 

To complete the proof, we next prove that with positive probability, for each $k$-AP $A$ in $G$ and every $j \in [r]$, there are at least $\delta k$ distinct elements of $A$ that are not of color $j$ in $c$. This is proved by a union bound over all colors $j$ and $k$-APs $A$ in $G$. 

Consider a color $j$ and a $k$-AP $A$ in $G=H_1 \times H_2$ with common difference $d=(d_1,d_2) \in H_1 \times H_2$. If $d_1=0$, since the fibers $x \times H_2$ are colored according to a translate of $c_2$ with the colors permuted, then $A$ has at least $\delta_2 k \geq \delta k$ elements not of color $j$. So assume $d_1 \not = 0$. Let $A_1$ be the projection of $A$ onto the first coordinate. There are at least $\delta_1k$ elements of $A_1$ that in coloring $c_1$ are not of color $j$. Thus we may take $A' \subset A$ so that $a:=|A'|=\lceil \delta_1 k \rceil$ and the projection of $A'$ onto $H_1$ consists of distinct elements that receive colors different from $j$. Let $B \subset A'$ be a subset of $b:=\lfloor \delta k \rfloor$ elements of $A$. Each element of $A'$ has probability at most $\varepsilon_2$ of being assigned color $j$ by coloring $c$ independently of the other elements of $A'$ because they lie in different $H_2$-fibers, whose shifts were chosen independently. So the probability that every element of $A' \setminus B$ receives color $j$ is at most $\varepsilon_2^{a-b} \leq \varepsilon_2^{2\delta_1 k/3}$. 

The number of colors $j \in [r]$ is $r$. Since in an arithmetic progression in $G$, the first term and the common difference are elements of $G$, the number of $k$-APs in $G$ is at most $|G|^2$. Given a set of size $a$, the number of subsets of size $b$ is ${a \choose b} \leq 2^{a}\le 2^k$. By the union bound, the probability that there is a color $j$ and a $k$-AP $A$ in $G$ such that all but at most $\delta k$  distinct elements of $A$ are of color $j$ is at most $r|G|^2 2^k\varepsilon_2^{2\delta_1 k/3}<1$. Hence, with positive probability, the coloring $c$ has the desired properties. 
\end{proof}

The only way we know how to get much larger constructions through these methods involves adding additional colors at each iteration. The following version of a random shift product coloring lemma allows us to do this. It has as an input a set-coloring $c_1$ in which each element is assigned a subset of $s$ colors from a set of $r$ colors so that, in any $k$-AP, no color is in almost all of the $s$-sets of colors. It also has as input an $r$-coloring $c_2$ without monochromatic $k$-term arithmetic progressions and with the property that the union of the $s$ largest color classes is very large. The outputted coloring $c$ has $r$ colors. We will be able to use this, starting from an $(r-s)$-coloring without a monochromatic $k$-AP, to get larger constructions without a monochromatic $k$-AP at the expense of adding $s$ colors in the next application of the lemma.

Let $c_1:H_1 \rightarrow {[r] \choose s}$ be a coloring of $H_1$ by subsets of $[r]$ of size $s$. We view $[r]$ here as a set of $r$ colors so that each element of $H_1$ receives a subset of $s$ of the colors. For $i \in [r]$, {\it color class $i$}, denoted $c_1^{-1}(i)$, is the set of elements of $H_1$ whose color (which is a subset of $[r]$ of size $s$) contains $i$. The density of color $i$ is the fraction of $H_1$ that is in color class $i$. 

\begin{lem}[Shifted Product Construction, set-coloring-assisted] \label{productlemma4} Let $r,k,k',s$ be positive integers, $\varepsilon>0$, and $H_1,H_2$ be finite abelian groups. Let $G=H_1 \times H_2$. Let $c_1: H_1 \rightarrow {[r] \choose s}$ and $c_2: H_2 \rightarrow [r]$. Suppose the color classes of $c_1$ and $c_2$ are $k$-AP-free. 
Further suppose
\begin{enumerate} 
\item For each $k$-term arithmetic progression $P$ in $H_1$ and every color $i \in [r]$ of $c_1$, there are at least $k'$ distinct elements of $P$ that are not of color $i$ in $c_1$, and   
\item In coloring $c_2$, the set of elements with color in $[r] \setminus [r-s]$ has density in $H_2$ at least $(1-\varepsilon)$. 
\end{enumerate}
If $\varepsilon^{-k'} > r|G|^2$, then there is an $r$-coloring $c:G \to [r]$ without a monochromatic $k$-AP. 
\end{lem}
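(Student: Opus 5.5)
We build $c$ as a random shifted product, following the proof of Lemma~\ref{productlemma}. For each $x \in H_1$, independently choose a uniformly random shift $y_x \in H_2$ and a bijection $\pi_x:[r]\to[r]$. The bijection $\pi_x$ sends the $s$ "large" colors $[r]\setminus[r-s]$ of $c_2$ onto the $s$-set $c_1(x)$, and sends the remaining $r-s$ colors $[r-s]$ onto $[r]\setminus c_1(x)$ in any fixed way. We then set $c(x,y+y_x):=\pi_x(c_2(y))$. Thus on each fiber $x\times H_2$, the coloring $c$ is a translate of $c_2$ with its colors relabelled, and the dense part of the fiber (density $\ge 1-\varepsilon$) uses only colors from $c_1(x)$.

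Fix a color $j$ and a $k$-AP $A$ in $G$ with common difference $d=(d_1,d_2)$; we want $A$ not monochromatic in color $j$. There are two cases.

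\emph{Case $d_1=0$.} Then $A$ lies in a single fiber. Its color classes there are relabelled translates of color classes of $c_2$, which are $k$-AP-free. So $A$ is never monochromatic, deterministically.

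\emph{Case $d_1\neq 0$.} The projection $A_1$ of $A$ onto $H_1$ is a $k$-AP in $H_1$. By hypothesis 1, there are at least $k'$ distinct elements $x\in A_1$ with $j\notin c_1(x)$; pick a subset $A'\subset A$ of $k'$ elements whose first coordinates are these distinct $x$. For such an element $(x,z)$, it receives color $j$ only if $z-y_x$ lies in a color class of $c_2$ outside $[r]\setminus[r-s]$. This holds because $\pi_x$ maps those large colors into $c_1(x)\not\ni j$. Since $y_x$ is uniform, this has probability at most $\varepsilon$ by hypothesis 2. The elements of $A'$ lie in distinct fibers with independent shifts, so the probability that all of $A'$ gets color $j$ is at most $\varepsilon^{k'}$.

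For the union bound, there are at most $|G|^2$ $k$-APs and $r$ colors. So the failure probability is at most $r|G|^2\varepsilon^{k'}<1$ by assumption, and some choice of shifts gives an $r$-coloring with no monochromatic $k$-AP.

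The only delicate point is the case split on $d_1$. Since $H_1$ may have small prime factors, $A_1$ might be degenerate when $d_1\neq 0$. I would handle this by reading hypothesis 1 as applying to any $k$-AP $P$ in $H_1$, including projections. The requirement of \emph{distinct} elements is exactly what makes the fibers, and hence the events, independent. Note that the hypothesis that the color classes of $c_1$ are $k$-AP-free is not needed in this argument; hypothesis 1 already supplies what we use.
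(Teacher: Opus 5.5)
Your proposal is correct and is essentially the paper's proof. Both use an independent uniform shift on each $H_2$-fiber, with $c_2$'s dense colors relabelled onto $c_1(x)$, then split on whether $d_1=0$, use independence across distinct fibers to get the bound $\varepsilon^{k'}$, and finish with a union bound over at most $r|G|^2$ pairs. Your reading of hypothesis 1 for degenerate projections matches the paper's definition of a $k$-AP as a sequence with nonzero common difference. You are also right that the $k$-AP-freeness of $c_1$'s color classes follows from hypothesis 1 and is never separately used.
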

\begin{proof}

By permuting the colors of $c_2$, for each $S \in {[r] \choose s}$, there is a coloring $c_{2,S}$ which lacks monochromatic $k$-APs and the set of elements with color in $S$ has density at least $(1-\varepsilon)$ in $H_2$. 

We color the fibers of the projection onto the second coordinate independently. So fix $x \in H_1$ and let $y_x$ be a uniformly random element of $H_2$. We color the fiber $x \times H_2$ according to the shift by $y_x$ of $c_{2,S}$ with $S=c_1(x)$. That is, $c(x,y) = c_{2,c_1(x)}(y+y_x)$. 

We show that in this random coloring $c$ the expected number of monochromatic $k$-APs is less than one, and hence there is a choice of $c$ for which there is no monochromatic $k$-AP. Recall that the number of $k$-term arithmetic progressions in $G$ is at most $|G|^2$ and there are $r$ colors. So, it suffices to show that, for any given color $i$ and $k$-term arithmetic progression $Q$ of $G$, the probability that $Q$ is monochromatic of color $i$ is at most $\varepsilon^{k'}$. 

Consider a color $i$ and a $k$-term arithmetic progression $Q$ in $H_1 \times H_2$ with common difference $d=(d_1,d_2) \in H_1 \times H_2$. If $d_1=0$, since the fibers $x \times H_2$ are colored according to a translate of $c_2$ with the colors permuted, then $Q$ cannot be monochromatic. So assume $d_1 \not = 0$. Let $P$ be the projection of $Q$ onto the first coordinate. In coloring $c_1$, $P$ has at least $k'$ elements that are not in color class $i$, so there are at least $k'$ elements of $Q$ that each independently have probability at most $\varepsilon$ of being color $i$. This independence follows as $d_1 \not = 0$, so these $k'$ elements lie in distinct $H_2$-fibers, whose random shifts are chosen independently. Hence, $Q$ has probability at most $\varepsilon^{k'}$ of being monochromatic, completing the proof. 
\end{proof}

\begin{rmk}
    The conditions required for these random shifted product constructions are somewhat improvable. For simplicity, we used a union bound in the proofs of Lemmas \ref{productlemma} and \ref{productlemma4}. Instead, we could use the Lov\'asz local lemma. Recall that we gave an upper bound $p$ on the probability that a given $k$-AP $P$ whose common difference has nonzero first coordinate $d_1$ has at least $(1-\delta)k$ elements of the same color, and bounded the number of such $k$-APs by $|G|^2$. Instead, using the symmetric Lov\'asz local lemma, the event that any such $k$-AP $P$ has $(1-\delta)k$ elements of the same color is independent of all other such $k$-APs $P'$ unless there is an element of $P'$ whose first coordinate is the same as an element from $P$. The number of such $k$-APs $P'$ is less than $k^2|G|^2/|H_1|$. We could thus replace $|G|^2$ in the conditions in each of these lemmas by $ek^2|G|^2/|H_1|$. However, using the better versions appears to give only minor improvements in the applications. 
\end{rmk}

\section{Lower bounds on van der Waerden numbers}
\label{section:vdw}
In this section, we prove Theorems~\ref{super-exponential} and \ref{powervdw} giving lower bounds on van der Waerden numbers.  

The tower function $T_n(x)$ is $n$-fold exponential in $x$. That is, $T_0(x)=x$ and $T_{n+1}(x)=2^{T_n(x)}$.  

\begin{thm}\label{firstkey}
Let $\varepsilon>0$ be sufficiently small and $b$ be a positive integer. Suppose $k \geq T_{3b}(\varepsilon^{-1})$.
Let $q_1,q_2,\ldots,q_b$ be distinct primes in $[2^{0.9k},2^{0.9k+1}]$. For $j \in [b]$, let $n_j=\prod_{i=1}^{j} q_i$. Let $\delta_1=2^{-1600/\varepsilon}$. For $j>1$, let $\varepsilon_j=2^{-10j/\delta_{j-1}}$ and $\delta_j=2^{-800/\varepsilon_j}$. 

For each $j \in [b]$ there is a three-coloring of $\mathbb{Z}_{n_j}$ in which one color class has density at least $(1-\varepsilon)$ and each color class has density at most $(1-\delta_j)$ in any $k$-term arithmetic progression. \end{thm}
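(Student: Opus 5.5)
We argue by induction on $j$, building the coloring of $\mathbb{Z}_{n_j}\cong \mathbb{Z}_{n_{j-1}}\times\mathbb{Z}_{q_j}$ with Lemma~\ref{productlemma} (with $r=3$, color $3$ the dense color). Throughout, every $n_j$ has all prime factors at least $2^{0.9k}\ge k$, so $k$-APs in these groups are genuine $k$-element sets, and the Chinese remainder theorem gives the product decompositions.

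\textbf{Base case $j=1$.} Here $n_1=q_1$ is a prime in $[2^{0.9k},2^{0.9k+1}]\subset[k,2.5^k]$. Apply Lemma~\ref{cyclicdense} with $m=1$ and $\varepsilon/2$ in place of $\varepsilon$. This is allowed since $k\ge T_{3b}(\varepsilon^{-1})\ge e^{144/\varepsilon}$. It gives a set $S\subset\mathbb{Z}_{q_1}$ of density at most $\varepsilon/2$ containing at least a $2^{-1600/\varepsilon}=\delta_1$ fraction of every $k$-AP. Color the complement of $S$ with color $3$, so color $3$ has density at least $1-\varepsilon$ and at most $(1-\delta_1)k$ elements of any $k$-AP. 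For the set $S$, use Corollary~\ref{lllcor2} to get a two-coloring of $\mathbb{Z}_{q_1}$ (order at most $2^{0.9k+1}$) in which every $k$-AP has at most $0.95k$ elements of each color, and restrict it to $S$ with colors $1,2$. Colors $1,2$ then have at most $0.95k\le(1-\delta_1)k$ elements in any $k$-AP.

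\textbf{Inductive step.} Suppose $\mathbb{Z}_{n_{j-1}}$ has a coloring $c_1$ in which color $3$ has density at least $1-\varepsilon$ and every color misses at least $\delta_{j-1}k$ elements of each $k$-AP. For $H_2=\mathbb{Z}_{q_j}$, build $c_2$ exactly as in the base case, but apply Lemma~\ref{cyclicdense} with $\varepsilon_j/2$. This needs $k\ge e^{144/\varepsilon_j}$, which holds because $1/\varepsilon_j=2^{10j/\delta_{j-1}}$ is a tower of height about $3(j-1)+1$ in $\varepsilon^{-1}$, and $k\ge T_{3b}(\varepsilon^{-1})$. The result is a coloring $c_2$ in which color $3$ has density at least $1-\varepsilon_j$ and each color misses at least a $2^{-1600/\varepsilon_j}$ fraction of every $k$-AP.

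Apply Lemma~\ref{productlemma} with $H_1=\mathbb{Z}_{n_{j-1}}$, $H_2=\mathbb{Z}_{q_j}$, $\varepsilon_1=\varepsilon$ and $\varepsilon_2=\varepsilon_j$. The hypothesis to check is
\[
\varepsilon_j^{-2\delta_{j-1}k/3}>3\cdot 2^k|G|^2.
\]
The left side is $2^{(10j/\delta_{j-1})(2\delta_{j-1}k/3)}=2^{20jk/3}$. The right side is at most $3\cdot2^k\cdot 2^{2j(0.9k+1)}$, which is smaller. The output has dense-color density at least $(1-\varepsilon)(1-\varepsilon_j)$ and discrepancy $\min(\delta_{j-1}/3,\,2^{-1600/\varepsilon_j})$.

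\textbf{Main obstacle: constant bookkeeping.} The product lemma loses both density and discrepancy at each step, while the statement keeps density $1-\varepsilon$ and wants discrepancy exactly $\delta_j=2^{-800/\varepsilon_j}$. I would fix this by strengthening the induction hypothesis.
\begin{itemize}
\item For density: prove color $3$ has density at least $\prod_i(1-\varepsilon_i')$, with $\varepsilon_1'=\varepsilon/2$ and the later $\varepsilon_i'$ tiny. Since $\sum_{i\ge 2}\varepsilon_i\ll\varepsilon/2$, this product stays above $1-\varepsilon$.
\item For discrepancy: note $\delta_{j-1}/3\ge 2^{-800/\varepsilon_j}$, since $\varepsilon_j$ is doubly exponentially small in $1/\delta_{j-1}$. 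So the first term of the minimum is harmless.
\item For the second term, which is the real issue: use Lemma~\ref{intervaldense} and Lemma~\ref{intervaltocyclic} directly with a factor-$2$ split, as in the proof of Lemma~\ref{cyclicdense}, rather than citing that lemma with $\varepsilon_j/2$. This gives density at most $\varepsilon_j$ with discrepancy $2^{-800/\varepsilon_j}$, at the cost of density $\varepsilon_j$ rather than $\varepsilon_j/2$, which the density bound absorbs.
\end{itemize}
Once this is arranged, the remaining inequalities follow from the tower-type growth of $k$.
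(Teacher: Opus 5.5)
\textbf{Overall.} With the adjustments in your final section, your argument is the paper's proof:
\begin{itemize}
\item induction on $j$ through Lemma~\ref{productlemma};
\item the base case from Lemma~\ref{cyclicdense} at $\varepsilon/2$, which is where $\delta_1=2^{-1600/\varepsilon}$ comes from;
\item the inductive step from Lemma~\ref{cyclicdense} at $\varepsilon_j$ itself (your ``factor-$2$ split'' of Lemmas~\ref{intervaldense} and~\ref{intervaltocyclic} is literally that lemma with $m=1$);
\item a strengthened density hypothesis (the paper carries $1-(1-2^{-j})\varepsilon$);
\item the observation $\delta_{j-1}/3\ge\delta_j$, and the check $2^{20jk/3}>3\cdot 2^k n_j^2$.
\end{itemize}
That adjustment is necessary, not just bookkeeping. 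If you carried discrepancy $2^{-1600/\varepsilon_{j-1}}=\delta_{j-1}^2$ instead of $\delta_{j-1}$, the left side of that check would shrink to $2^{20j\delta_{j-1}k/3}$. The product lemma would then no longer apply.

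\textbf{The step to replace: the two-coloring of $\mathbb{Z}_{q_j}$.} Corollary~\ref{lllcor2}, and the remark after it promising at most $0.95k$ elements of each color whenever $N\le 2^{0.9k+1}$, overstate what the local lemma gives. Corollary~\ref{lllcor} with $r=2$, $s=1$, $\delta=0.05$ requires
\[
N\le\frac{1}{2ek^2}\bigl((0.05/e)^{0.05}2^{0.95}\bigr)^k<2^{0.67k}.
\]
The reason is that for fixed $\delta$ the factor $(\delta/e)^{\delta}$ is a constant below $1$, not $1-o(1)$. Even the exact binomial tail only yields an exponent of about $0.71$ at $\delta=0.05$. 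So this citation does not cover $q_j\ge 2^{0.9k}$, and as written your base case and inductive step rest on an unjustified bound.

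\textbf{The fix.} Do as the paper does and apply Corollary~\ref{lllcor} directly with $\delta=\delta_j$ (with $\delta_1$ in the base case). Since $\delta_j\le 2^{-1600/\varepsilon}$, you have $(\delta_j/e)^{\delta_j}2^{1-\delta_j}\ge 2^{0.95}$. Hence the hypothesis holds for all $q_j\le 2^{0.9k+1}$ once $k$ is large. Colors $1,2$ then miss at least $\delta_j k$ elements of every $k$-AP, which is exactly what you need (choosing $\delta=0.01$ would also suffice). With that substitution the proof goes through.
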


\begin{proof}
The fact that such primes $q_1,\ldots,q_b$ exist follows from the prime number theorem and the lower bound on $k$. 

The proof is by induction on $j$. In fact, we will prove that the dense color class has density at least $(1-(1-2^{-j})\varepsilon)$ in the three-coloring of $\mathbb{Z}_{n_j}$. 

To establish the base case $j=1$, let $S_1 \subset \mathbb{Z}_{n_1}$ which has density at least $(1-\varepsilon/2)$ and has density at most $(1-\delta_1)$ in any $k$-AP. Such a subset exists by taking the complement of the set given by Lemma \ref{cyclicdense} with $m=1$. By Corollary \ref{lllcor} applied with $r=2$, $s=1$, there is a two-coloring of $\mathbb{Z}_{n_1}$ in which each color class has density at most $(1-\delta_1)$ in each $k$-AP. By recoloring the elements of $S_1$  a third color, we get a three-coloring $C_1$ of $\mathbb{Z}_{n_1}$ in which one color has density at least $(1-\varepsilon/2)$ and each color class has density at most $(1-\delta_1)$ in each $k$-AP.

The induction hypothesis is that for some $j \geq 2$, we have found the desired three-coloring $C_{j-1}:\mathbb{Z}_{n_{j-1}}\rightarrow [3]$, which has density at least $(1-(1-2^{1-j})\varepsilon)$ in color $3$, and each color class has density at most $(1-\delta_{j-1})$ in each $k$-AP. 

By taking the complement of the set given by Lemma~\ref{cyclicdense}, there is a subset $S_j$ of $\mathbb{Z}_{q_j}$ with density at least $(1-\varepsilon_j)$ and whose density in any $k$-AP is at most $(1-\delta_j)$ (after checking the parameters\footnote{Specifically, we need that $q_j\ge \exp(60/\varepsilon_j)$. Noting $q_j\ge \exp(k/2)$, we just need to check $\varepsilon_j^{-1}\le T_{3b}(\varepsilon^{-1})/120$. 

Since $\varepsilon$ was taken sufficiently small, one has that $\delta_1^{-1}\le T_2(\varepsilon^{-1})$, hence a basic induction gives $\delta_j^{-1}\le T_3(\delta_{j-1}^{-1})\le T_{3j-1}(\varepsilon^{-1})$. Thus $\varepsilon_j^{-1}\le \delta_j^{-1}\le k/120$.}).

By Corollary \ref{lllcor} applied with $r=2$, $s=1$, there is a two-coloring of $\mathbb{Z}_{q_j}$ with colors $1,2$ in which each color class has density at most $(1-\delta_j)$ in each $k$-AP. By recoloring the elements of $S_j$ color $3$, we get a three-coloring $C_j':\mathbb{Z}_{q_j} \rightarrow [3]$ in which one color has density at least $(1-\varepsilon_j)$ and each color class has density at most $(1-\delta_j)$ in each $k$-AP.

We will next apply Lemma~\ref{productlemma}. We do this with $H_1:=\mathbb{Z}_{n_{j-1}}, c_1:= C_{j-1},H_2:=\mathbb{Z}_{q_j},c_2:= C'_j$ to get a coloring $c$ of $H_1\times H_2$ (which we shall call $C_j$). As $\varepsilon_j^{-2\delta_{j-1} k/3}>3 \cdot 2^k \cdot n_j^{2}$, we can indeed apply Lemma~\ref{productlemma} with these parameters.  As $n_{j-1}$ and $q_j$ are relatively prime, we have $H_1 \times H_2 = \mathbb{Z}_{n_{j-1}} \times \mathbb{Z}_{q_j} \cong \mathbb{Z}_{n_j}$. As $\varepsilon_j \leq \varepsilon/2^j$, color $3$ in the coloring $C_j$ has density at least $(1-\varepsilon_j)(1-(1-2^{1-j})\varepsilon)) \geq 1-(1-2^{-j})\varepsilon$. Also, in the coloring $C_j$, we are guaranteed that each color has density at most $(1-\delta_j)$ in each $k$-AP. This completes the proof by induction. 
\end{proof}

Theorem \ref{super-exponential} is a quick corollary of the previous theorem. 

\begin{proof}[Proof of Theorem \ref{super-exponential}]

Let $\varepsilon>0$ be a sufficiently small absolute constant. Let $b=\lfloor (\log^* k)/3.5\rfloor$. There are $b$ distinct primes $q_1,\ldots,q_b \in [2^{0.9k},2^{0.9k+1}]$ by the prime number theorem. Since $k$ is sufficiently large, $k \geq T_{3b}(1/\varepsilon)$. Let $n:=\prod_{j=1}^b q_j \geq 2^{0.9kb} \geq 2^{k(\log^* k)/4}$. 
By Theorem~\ref{firstkey}, there is a three-coloring of $\mathbb{Z}_n$ with no monochromatic $k$-AP. In particular, $w(k;3) > 2^{k(\log^* k)/4}$. 
\end{proof}

We now turn to establishing Theorem~\ref{powervdw}, getting much stronger bounds for the ``many color regime''. It will follow as a corollary from the next theorem.

\begin{thm}\label{masterpower}
    Suppose $k,r,m,s$ are positive integers with $k$ sufficiently large with $4ms \le r\le m^{k/2}$. Further suppose that $N<(2m)^{mk/3}$, and that $(1-\delta_k(N))^{-sk/100}>N$. There is $N' \geq N$ and an $r$-coloring of $\mathbb{Z}_{N'}$ without a monochromatic $k$-AP. Hence, $w(k;r)> N$.
\end{thm}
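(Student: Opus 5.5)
We will build the coloring with the set-coloring-assisted shifted product, Lemma~\ref{productlemma4}. The first factor $H_1$ carries a set-coloring coming from the local lemma. The second factor $H_2=\mathbb{Z}_N$ carries an $r$-coloring with no monochromatic $k$-AP in which $s$ prescribed colors cover almost everything; this one comes from Lemma~\ref{addscolors general}.

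\textbf{The factor $H_2$.} Take $H_2=\mathbb{Z}_N$. Start from any coloring of $\mathbb{Z}_N$ with $r-2s$ colors and no monochromatic $k$-AP. Such a coloring exists by Corollary~\ref{coruniondense0}, covering $\mathbb{Z}_N$ completely with $2s'$ progression-free sets for a suitable $s'$; we need $2s'\le r-2s$, which is where the hypothesis $r\ge 4ms$ gives room. Apply Lemma~\ref{addscolors general} to obtain $c_2:\mathbb{Z}_N\to[r]$ with no monochromatic $k$-AP, in which the $2s$ colors of $[r]\setminus[r-2s]$ cover all but a fraction $\varepsilon:=(1-\delta_k(N))^{s}$ of $\mathbb{Z}_N$. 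For Lemma~\ref{productlemma4} we need $s$ colors covering density $1-\varepsilon$, so we apply that lemma with $2s$ playing the role of $s$.

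\textbf{The factor $H_1$.} Take $H_1=\mathbb{Z}_M$ with $M$ a prime (or a product of large primes), chosen coprime to $N$ and with no prime factor less than $k$. The purpose is that $N'=MN\ge N$ and $\mathbb{Z}_M\times\mathbb{Z}_N\cong\mathbb{Z}_{N'}$. Apply Corollary~\ref{lllcor} with $r$ colors and $2s$-sets, with $\delta$ a small constant such as $\delta=1/100$. This gives $c_1:\mathbb{Z}_M\to\binom{[r]}{2s}$ in which every $k$-AP has at least $k':=\delta k$ elements avoiding any given color. The corollary needs $M\lesssim \big((\delta/e)^{\delta}(r/2s)^{1-\delta}\big)^k/(ek^2r)$. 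Since $r/2s\ge 2m$, we can take $M$ of order $(2m)^{(1-2\delta)k}$, and the bound $r\le m^{k/2}$ keeps the factor $1/(ek^2 r)$ harmless.

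Lemma~\ref{productlemma4} also requires the color classes of $c_1$ to be $k$-AP-free. That property is exactly the $k'\ge1$ condition, so it comes for free.

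\textbf{The remaining condition and the main obstacle.} It remains to check $\varepsilon^{-k'}>r|G|^2=r(MN)^2$, that is, $(1-\delta_k(N))^{-s\delta k}>r M^2N^2$. The hypothesis $(1-\delta_k(N))^{-sk/100}>N$ supplies only one factor of $N$. This is the step I expect to be the real obstacle.

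The constants must be tuned. One option is to take $\delta$ larger, say $\delta\approx 1/3$, which still gives a usable $M$ from Corollary~\ref{lllcor}. Another is to use the local-lemma refinement mentioned in the remark after Lemma~\ref{productlemma4}, replacing $|G|^2$ by $ek^2|G|^2/|H_1|=ek^2MN^2$. One must then also absorb the factors $M^2$ and $r$. For that I would use $N<(2m)^{mk/3}$ to compare $M\approx(2m)^{ck}$ with $N$, roughly $M\le N^{O(1/m)}$, so that $M^2 r$ is a lower-order factor.

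If the exponents still do not close, fall back to a two-step variant. First boost the density by a factor $2^{O(1)}$, replacing $s$ by a constant multiple of $s$ in Lemma~\ref{addscolors general}. This raises $\varepsilon^{-k'}$ to a power of $N$ at least $2+o(1)$, and then the union bound in Lemma~\ref{productlemma4} goes through. Together with $N'=MN\ge N$, this yields the $r$-coloring of $\mathbb{Z}_{N'}$ with no monochromatic $k$-AP, and hence $w(k;r)>N$.
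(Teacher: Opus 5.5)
There is a genuine gap, and it comes at your very first step, before the issue you flagged. You need an $(r-2s)$-coloring of $\mathbb{Z}_N$ with no monochromatic $k$-AP. Corollary~\ref{coruniondense0} only guarantees a complete cover of $\mathbb{Z}_N$ when $(1-\delta_k(N))^{s'}<1/N$. The only information available, $(1-\delta_k(N))^{-sk/100}>N$, gives this for $s'=\lceil sk/100\rceil$ and not for much smaller $s'$, so you would need $r\gtrsim sk/50$. The hypothesis $r\ge 4ms$ gives that only when $m\gtrsim k/200$. The theorem is meant for $m$ far smaller than $k$: in Corollary~\ref{lastcor}, $m\approx 6r^{2/k}\log r\log k$.

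More fundamentally, the step is circular. An $(r-2s)$-coloring of $\mathbb{Z}_N$ without monochromatic $k$-APs already gives $w(k;r)\ge w(k;r-2s)>N$, so the product with $\mathbb{Z}_M$ adds nothing. In the regime where Corollary~\ref{coruniondense0} does supply such a coloring, the theorem is immediate. Tellingly, your argument never genuinely uses $N<(2m)^{mk/3}$. The obstacle you did flag is harmless: taking $k'=k/10$ in Corollary~\ref{lllcor} gives $\varepsilon^{-k'}=\bigl((1-\delta_k(N))^{-sk/100}\bigr)^{10}>N^{10}$.

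The missing idea is to build the coloring that Lemma~\ref{addscolors general} acts on recursively, on smaller groups with fewer colors. The paper proceeds as follows.
\begin{itemize}
\item It takes distinct primes $p_0,\dots,p_m\in[(2m)^{k/3},(2m)^{k/3+1}]$ and color budgets $r_i=r-2s(m-i)\ge r/2$. This is where $4ms\le r$ enters.
\item It obtains an $r_0$-coloring of $\mathbb{Z}_{p_0}$ with no monochromatic $k$-AP directly from Corollary~\ref{lllcor}.
\item At step $i$, it applies Lemma~\ref{addscolors general} to the already constructed coloring of $\mathbb{Z}_{n_{i-1}}$, where $n_{i-1}=p_0\cdots p_{i-1}\le N$, so that $\delta_k(n_{i-1})\ge\delta_k(N)$. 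This adds $2s$ colors.
\item It then applies Lemma~\ref{productlemma4} with a set-coloring of $\mathbb{Z}_{p_i}$ from Corollary~\ref{lllcor} (with $k'=0.1k$) to produce an $r_i$-coloring of $\mathbb{Z}_{n_i}$. The union-bound condition holds because $\varepsilon_i^{-0.1k}>N^{10}>r_in_i^2$.
\end{itemize}
Each step multiplies the modulus by at least $(2m)^{k/3}$ at a cost of $2s$ colors. The hypothesis $N<(2m)^{mk/3}$ therefore ensures the modulus exceeds $N$ before the color budget $r$ is exhausted.
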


\begin{proof}
    For $i=0,1,\dots,m$, let $r_i := r+2s(i-m)$. We note that $r_i\ge r/2$ for each $i\ge 0$ and $r_m=r$. 

    Let $p_0,\dots,p_m$ be distinct primes with $(2m)^{k/3}\le p_i\le (2m)^{k/3+1}$ for all $i\in \{0,1,\dots,m\}$. Such primes exist by the prime number theorem and $k$ being sufficiently large. Now, for $i=0,\dots,m-1$, let $n_i:= \prod_{j=0}^i p_j$. Observe that $n_{m-1} \geq (2m)^{mk/3}>N$ by assumption. Hence, we may set $m'$ to be the smallest index where $n_{m'}>N$. Thus, it suffices to show there is an $r$-coloring $c_{m'}$ of $\mathbb{Z}_{n_{m'}}$ with no monochromatic $k$-AP. This will be constructed through a recursive process, where in step $i$ we construct $c_i:\mathbb{Z}_{n_i}\to [r_i]$ with no monochromatic $k$-AP. 

    By Corollary~\ref{lllcor} (since $r_i\ge r/2$ and $r_i/s\ge 2m$), for $i \in [m']$ there is a set coloring $c_{1,i}:\mathbb{Z}_{p_i} \rightarrow {[r_i] \choose s}$ such that for each color $j \in [r_i]$ and every $k$-AP $P$ in $\mathbb{Z}_{p_i}$, there are at least $k':=0.1k$ elements $x\in P$ with $j\not\in c_{1,i}(x)$. Indeed, the one thing to verify is that $$((0.1/e)^{0.1}(r_i/s)^{0.9})^k>(0.7 \cdot (2m)^{0.9})^k > (1.02)^k m^{k/2} (2m)^{k/3+1} >ek^2 r_i p_i.$$ 

We will repeatedly apply Lemma~\ref{productlemma4}. In step $i$, colorings $c_{1,i}:\mathbb{Z}_{p_i} \rightarrow {[r_i] \choose s}$ and $c_{2,i}:\mathbb{Z}_{n_{i-1}} \rightarrow [r_i]$ serve as $c_1$ and $c_2$ in the application of Lemma \ref{productlemma4}. As $\mathbb{Z}_{p_i} \times \mathbb{Z}_{n_{i-1}} \cong \mathbb{Z}_{n_i}$, this will output a coloring $c_i:\mathbb{Z}_{n_i} \rightarrow [r_i]$ without a monochromatic $k$-AP.

We next describe how we construct $c_0$, obtain $c_{2,i}$ from $c_{i-1}$ in step $i \in [m']$, and finally check that the conditions of Lemma \ref{productlemma4} are satisfied to construct coloring $c_i$ from colorings $c_{1,i}$ and $c_{2,i}$.

By Corollary \ref{lllcor} with $s=1$ and $\delta=1/k$, there is a coloring $c_0:\mathbb{Z}_{p_0} \rightarrow [r_0]$ without a monochromatic $k$-AP. If $m'=0$, we are done. Otherwise, we crudely have that $n_{m'}\le N^3$ (as $n_{m'-1}<N$ and $p_{m'}/p_0<p_0<N$).

Note $r_i=r_{i-1}+2s$. Also observe that $n_{i-1}\leq n_{m'-1} \leq N$. By Lemma~\ref{addscolors general} applied with $r=r_i$, $s=s$, and coloring $c_{i-1}:\mathbb{Z}_{n_{i-1}} \rightarrow [r_{i-1}]$, there is another coloring $c_{2,i}:\mathbb{Z}_{n_{i-1}} \rightarrow [r_i]$ which also has no monochromatic $k$-APs and for which the density of elements with color in $[r_{i-1}]$ is at most \[\varepsilon_i:=(1-\delta_k(n_{i-1}))^{s} \leq (1-\delta_k(N))^{s},\] hence by assumption we have $\varepsilon_i^{-0.1k} \geq N^{10} > n_i^3 \geq r_in_i^2$. Having satisfied all necessary assumptions, Lemma~\ref{productlemma4} implies that there is a coloring $c_i:\mathbb{Z}_{n_i} \rightarrow [r_i]$ without a monochromatic $k$-AP. Iterating up to $i=m'$ yields the result.
\end{proof}

We can now extract the following corollary, which implies Theorem~\ref{powervdw}.
\begin{cor}\label{lastcor}
    Fix $\alpha>0$ (not necessarily bounded by $1$) and $\varepsilon>0$. For all sufficiently large $k$, if $r\ge (\log k)^{2+\alpha}$, then $w(k;r)\ge r^{(1-c_\alpha-\varepsilon)k\log k}$, where $c_\alpha=2/(2+\alpha)$.
\end{cor}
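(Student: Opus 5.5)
The corollary should follow by feeding suitable parameters into Theorem~\ref{masterpower}. Fix $\lambda := 1-c_\alpha-\varepsilon = \alpha/(2+\alpha)-\varepsilon$; we may assume $\lambda>0$. Aim for $N := \lfloor r^{\lambda k\log k}\rfloor$, so that $L:=\log N\approx \lambda k(\log k)(\log r)$. The work consists of choosing $s$ and $m$ so that the three hypotheses of Theorem~\ref{masterpower} hold:
\[4ms\le r\le m^{k/2},\qquad N<(2m)^{mk/3},\qquad (1-\delta_k(N))^{-sk/100}>N.\]

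\textbf{Step 1: lower-bounding $\delta_k(N)$.} Corollary~\ref{generaldeltakbound} loses a factor $2$ in the exponent, and that loss would only give $\lambda<(1-c_\alpha)/2$. So instead use Corollary~\ref{szemcor} with a prime $p\in[(1-o(1))k,k]$, which exists by the prime number theorem. A $p$-AP-free set is $k$-AP-free, so $r_k(M)\ge r_p(M)\ge(1-1/p)^tM$ for every $M\le p^t$. Hence $\delta_k(N)\ge(1-1/p)^t$ with $t=\lceil L/\log p\rceil$. Since $t/p\approx L/(k\log k)=\lambda\log r$, this gives
\[\delta_k(N)\ge r^{-\lambda(1+o(1))}.\]
Consequently
\[-\log(1-\delta_k(N))\ge \delta_k(N)\ge r^{-\lambda(1+o(1))}.\]
The last hypothesis of Theorem~\ref{masterpower} then reduces to $sk\,r^{-\lambda(1+o(1))}/100>L$. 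Taking
\[s:=\left\lceil 200\lambda\, r^{\lambda(1+o(1))}(\log k)(\log r)\right\rceil\]
suffices, and in the regime $r\ge(\log k)^{2+\alpha}$ this is $r^{\lambda+o(1)}$.

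\textbf{Step 2: choosing $m$.} Take the largest $m$ with $4ms\le r$, so $m\approx r^{1-\lambda-o(1)}/((\log k)(\log r))$. The condition $r\le m^{k/2}$ is harmless: it only needs $m\ge r^{2/k}$, which holds comfortably since $m$ is a positive power of $r$ divided by polylogarithmic factors. The remaining condition is $N<(2m)^{mk/3}$, i.e.
\[\lambda k(\log k)(\log r)<\tfrac{k}{3}\,m\log(2m).\]
Since $\log m=\Theta(\log r)$, this reduces to $m\gg(\log k)$, i.e.
\[r^{1-\lambda-o(1)}\gg(\log k)^2\log r.\]
Using $r\ge(\log k)^{2+\alpha}$, the left side is at least $(\log k)^{(2+\alpha)(1-\lambda)-o(1)}$. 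Because $\lambda<\alpha/(2+\alpha)$, we have $(2+\alpha)(1-\lambda)>2$ strictly, with slack of order $\varepsilon$. This slack absorbs the $\log r$ and constant factors whenever $r$ is polylogarithmic in $k$. When $r$ is larger, the inequality only gets easier, since the power of $r$ dominates.

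\textbf{Step 3: conclusion and main obstacle.} With these choices Theorem~\ref{masterpower} gives $w(k;r)>N\ge r^{(1-c_\alpha-\varepsilon)k\log k}$, after shrinking $\varepsilon$ slightly to absorb rounding. The main obstacle is bookkeeping the $o(1)$ and polylogarithmic losses uniformly over all $r\ge(\log k)^{2+\alpha}$, in particular the $r^{o(1)}$ error in the estimate for $\delta_k$. One concern is when $L/(k\log k)$ is large compared with $p$, so that $(1-1/p)^t$ versus $e^{-t/p}$ must be handled carefully. Here I would use $(1-1/p)^t\ge e^{-t/p-t/p^2}$, whose extra factor is $r^{O(\lambda/k)}=r^{o(1)}$. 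I would also split into the cases $r\le k^{C}$ and $r>k^{C}$ to verify that Step 2 holds throughout.
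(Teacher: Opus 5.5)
Your proposal is correct and follows essentially the paper's proof. Like the paper, you apply Theorem~\ref{masterpower} and bound $\delta_k(N)$ via Corollary~\ref{szemcor} with a prime $p\in[(1-o(1))k,k]$; the paper takes $p\ge(1-\varepsilon/10)k$ and uses $1-x\ge e^{-x-2x^2}$, precisely to avoid the factor-$2$ loss you identified in Corollary~\ref{generaldeltakbound}. The only difference is which parameter you set tight: the paper fixes $m=\lceil 6r^{2/k}\log r\log k\rceil$ so that $N<(2m)^{mk/3}$ and $r\le m^{k/2}$ hold outright, takes $s=\lfloor r/4m\rfloor$, and then checks the $\delta_k$ hypothesis with slack $r^{\varepsilon/4}$, whereas you make $s$ minimal and $m$ maximal and spend the same slack on the condition $N<(2m)^{mk/3}$.
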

\begin{proof}
    Without loss of generality, we may assume $\varepsilon$ is sufficiently small with respect to $\alpha$.

    Let $m:=\lceil 6 r^{2/k}\log r\log k\rceil$. Therefore, $2^{km/3}>r^{k\log k}$ and $m^{k/2}\ge r$. We next set $s:= \lfloor r/4m\rfloor \ge r^{1-2/(2+\alpha)}\frac{\log k}{25\log r}$, and $N := r^{(1-c_\alpha-\varepsilon)k\log k}$. Let $t:= \lfloor (1-c_\alpha-\varepsilon/2)k\log r \rfloor$. We have $N< k^t$ (since $k$ is large). By the prime number theorem, there exists a prime $p\in [(1-\varepsilon/10)k,k]$, hence\footnote{Using that for all $0 \leq x<1/2$, that $1-x    \ge \exp(-x-2x^2)$.} $$\delta_k(N)\ge \left(1-\frac{1}{p}\right)^t \geq \exp\left(-(p^{-1}+2p^{-2})t\right) \ge \exp\left(-(1+\varepsilon/5)t/k\right) \geq \exp(-(1-c_\alpha-\varepsilon/4)\log r)=:\delta.$$ 

Thus, $(1-\delta_k(N))^s\le \exp(-s \delta)= \exp(- \log k \cdot r^{\varepsilon/4} /25\log r)$. Note that $r^{\varepsilon /4}\ge 2500 (\log r)^2$ as $k$ and hence $r$ is sufficiently large with respect to $\varepsilon$. As $N\le \exp(k\log k\log r)$, together with the last two bounds, we can indeed apply Theorem~\ref{masterpower} to get the desired conclusion. 
\end{proof}

\section{Concluding Remarks}\label{section:conclude} 

The factor $r^{2/k}$ in the definition of $m$ in the proof of  Corollary~\ref{lastcor} is there to obtain the inequality $m^{k/2} \geq r$ needed to apply Theorem \ref{masterpower}. In order to obtain $w(k;r) \geq r^{(1-o(1))k\log k}$ for $r =(\log k)^{\Omega(1)}$, we could have alternatively removed the factor, proved the bound first for $r \leq 2^k$, and then repeatedly used a simple product bound (Lemma \ref{productvdwbound} below) to get a bound for larger $r$ using that $w(k;r)$ is increasing as a function of $r$. To obtain Lemma \ref{productvdwbound}, we first need a simple lemma that, by losing a factor of two in the number of colors, obtains a cyclic coloring without a monochromatic $k$-AP from an interval coloring without a monochromatic $k$-AP. 

\begin{lem}\label{colorcycfromint}
For each positive integer $N \leq 2w(k;r)-2$, there is a $(2r)$-coloring of $\mathbb{Z}_N$ that has no monochromatic $k$-AP. 
\end{lem}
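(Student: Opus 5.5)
The plan is to mimic the proof of Lemma~\ref{lemmatwosets0}, replacing the two $k$-AP-free sets by two copies of an $r$-coloring of an interval. Let $W:=w(k;r)-1$, so there is an $r$-coloring $\chi:\{0,1,\ldots,W-1\}\to[r]$ (after shifting $[W]$ to start at $0$) with no monochromatic $k$-AP in $\mathbb{Z}$. The case $N\le W$ needs no splitting. Otherwise $W<N\le 2W$. Set $N_1:=\lceil N/2\rceil$, so both $N_1\le W$ and $N-N_1\le W$.

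Color $\{0,1,\ldots,N_1-1\}\subset\mathbb{Z}_N$ by $\chi$ restricted to $\{0,\ldots,N_1-1\}$, using colors $1,\ldots,r$. Color $\{N_1,\ldots,N-1\}$ by $x\mapsto \chi(x-N_1)+r$, using colors $r+1,\ldots,2r$. Both restrictions make sense because each block has length at most $W$. Each color class then lies entirely inside one of the two ``halves'' $J_1=\{0,\ldots,N_1-1\}$ or $J_2=\{N_1,\ldots,N-1\}$, and is $k$-AP-free as a subset of $\mathbb{Z}$.

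The key step is to show that a monochromatic $k$-AP $P$ in $\mathbb{Z}_N$ inside $J_1$ or $J_2$ lifts to a genuine $k$-AP of integers inside that block. The argument is the same one used at the end of the proof of Lemma~\ref{lemmatwosets0}.

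First, reverse $P$ if needed so that its common difference has representative $d$ with $1\le d\le N/2$. Now consider consecutive terms $x$ and $x+d$ of $P$, both in $J_i$, with integer representatives $x'$ and $y'$ in $J_i$. Then $y'-x'\equiv d \pmod N$ and $|y'-x'|<|J_i|\le \lceil N/2\rceil$. This forces $y'-x'=d$, or $y'-x'=d-N$ with $|d-N|<\lceil N/2\rceil$. The latter requires $d>N-\lceil N/2\rceil=\lfloor N/2\rfloor$, i.e. $d=N/2$ is impossible but for odd $N$ we could have $d=(N+1)/2>N/2$, which we excluded.

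The hardest point is the edge case $d=N/2$ with $N$ even. In that case $y'-x'=-N/2$ and $|J_i|=N/2$, so $|y'-x'|=N/2$ cannot hold with both points in a block of size $N/2$. Hence that case is also excluded, and every consecutive difference in the lift equals $+d$.

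So the representatives form a $k$-AP of integers inside the block, and it is monochromatic under a translate of $\chi$. This contradicts the choice of $\chi$, so the $(2r)$-coloring of $\mathbb{Z}_N$ has no monochromatic $k$-AP.
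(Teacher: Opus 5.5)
Your construction and your lifting argument match the paper's proof: split $\mathbb{Z}_N$ into the integer blocks $\{0,\dots,\lceil N/2\rceil-1\}$ and $\{\lceil N/2\rceil,\dots,N-1\}$, and color each block by a good $r$-coloring of an interval, using disjoint palettes. A $k$-AP of $\mathbb{Z}_N$ lying in one block then lifts to an integer $k$-AP with difference $d\in[1,N/2]$. Your check of this last point correctly fills in what the paper states in one line. The even-$N$, $d=N/2$ case you single out is in fact already excluded by your strict inequality $|y'-x'|\le |J_i|-1$.

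The one genuine error is the sentence ``The case $N\le W$ needs no splitting.'' Read as coloring $\mathbb{Z}_N$ directly by $\chi$ restricted to $\{0,\dots,N-1\}$, it is false, because $\chi$ says nothing about progressions that wrap around. Concretely, take $k=3$ and $r=2$, so $w(3;2)=9$ and $W=8$. Let $\chi$ be the coloring $R,R,B,B,R,R,B,B$ of $\{0,\dots,7\}$, which has no monochromatic $3$-AP in $\mathbb{Z}$.
\begin{itemize}
\item For $N=5\le W$, the progression $4,0,1$ in $\mathbb{Z}_5$ is monochromatic.
\item For $N=8$, even the degenerate progression $0,4,0$ (difference $N/2$) is monochromatic.
\end{itemize}
So the case $N\le W$ really does need the two blocks and $2r$ colors.

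The fix is to drop the case distinction. Your two-block argument uses only $\lceil N/2\rceil\le W$ and $\lfloor N/2\rfloor\le W$, and these hold for every $N\le 2W$. This is exactly how the paper proceeds.
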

\begin{proof}
  Let $N_1=\lceil N/2 \rceil$ and $N_2=\lfloor N/2 \rfloor$, so $N_1+N_2=N$ and $N_1,N_2<\min(\frac{N}{2}+1,w(k;r))$. So by definition of $w(k;r)$, there are $r$-colorings with distinct sets of colors of intervals of length $N_1$ and $N_2$ each without a monochromatic $k$-AP. Partition $\mathbb{Z}_N$ into two intervals of length $N_1$ and $N_2$; say the first interval is $I_1=\{0,\ldots,N_1-1\}$ and the second interval is $I_2=[N_1,N-1]$. Use the $r$-colorings on each of the intervals to get a $2r$-coloring of $\mathbb{Z}_N$. Any $k$-AP in $I_i$ for $i \in \{1,2\}$ is a $\mathbb{Z}$-arithmetic progression, so the $(2r)$-coloring has no monochromatic $k$-AP. 
\end{proof}

\begin{lem}\label{productvdwbound}
For integers $k \geq 3$ and $r_1,r_2 \geq 2$, we have $w(k;4r_1r_2) \geq w(k;r_1)w(k;r_2)$.
\end{lem}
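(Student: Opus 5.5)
The plan is to convert both interval colorings into cyclic colorings via Lemma~\ref{colorcycfromint}, take their product coloring on a cyclic group via Lemma~\ref{simpleproductcoloring}, and then read off an interval coloring.

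Write $W_i := w(k;r_i)$. By definition, there are $r_i$-colorings of $[W_i-1]$ with no monochromatic $k$-AP.

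\textbf{Step 1 (pass to cyclic groups).} First I will choose cyclic moduli $N_1,N_2$ with $\gcd(N_1,N_2)=1$ and $N_1N_2 \ge W_1W_2$. By Lemma~\ref{colorcycfromint}, every $N\le 2W_i-2$ admits a $(2r_i)$-coloring of $\mathbb{Z}_N$ with no monochromatic $k$-AP. There is room to spare, since we only need $N_i$ of size about $W_i$. Take $N_1 = W_1$ (or $W_1+1$, to keep things clean). For $N_2$, pick one of the two consecutive integers $W_2$ and $W_2+1$. Since these are coprime to each other, at most one of them can share a factor with $N_1$ that matters; more simply, choose $N_2\in\{W_2,\dots,2W_2-2\}$ coprime to $N_1$, e.g.\ a prime in $(W_2, 2W_2-2]$ that does not divide $N_1$. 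Such a prime exists by Bertrand's postulate once $W_2\ge 4$. This holds because $W_2\ge w(3;2)=9$, and even a single prime $p$ in that range can fail only if $p\mid N_1$. If that happens, use $N_1=W_1+1$ instead, or choose another prime, since $(W_2,2W_2-2]$ contains at least two primes for moderate $W_2$. This coprimality bookkeeping is the only fiddly point; a cleaner alternative is to take $N_1=W_1$ and $N_2$ a prime in $(W_2,2W_2-2]$ exceeding $W_1$ when $W_2>W_1$ (symmetrically otherwise), and when $W_1=W_2$ take $N_1=W_1$, $N_2=W_1+1$.

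\textbf{Step 2 (product).} By the Chinese remainder theorem, $\mathbb{Z}_{N_1N_2}\cong \mathbb{Z}_{N_1}\times\mathbb{Z}_{N_2}$. Lemma~\ref{simpleproductcoloring} then gives a $(2r_1)(2r_2)=4r_1r_2$-coloring of $\mathbb{Z}_{N_1N_2}$ with no monochromatic $k$-AP.

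\textbf{Step 3 (back to intervals).} Identify $\mathbb{Z}_{N_1N_2}$ with $\{0,\dots,N_1N_2-1\}$. Any $k$-AP in this integer interval projects to a $k$-AP in $\mathbb{Z}_{N_1N_2}$ with nonzero common difference, since its difference is less than $N_1N_2$ in absolute value. So $[N_1N_2]$ has a $4r_1r_2$-coloring with no monochromatic $k$-AP, which gives $w(k;4r_1r_2) > N_1N_2 \ge W_1W_2$.

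The main obstacle is Step 1: arranging coprime moduli that are at least $W_i$ but at most $2W_i-2$. The factor-of-two slack in Lemma~\ref{colorcycfromint}, together with Bertrand's postulate, should handle this. It may even be possible to avoid coprimality altogether by allowing the larger group $\mathbb{Z}_{N_1}\times\mathbb{Z}_{N_2}$, but then one needs a cyclic group to pull back to an interval, so coprimality is the route I would take.
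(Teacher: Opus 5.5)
Your proof is correct and follows the paper's route: Lemma~\ref{colorcycfromint} on each factor, the Chinese remainder theorem, Lemma~\ref{simpleproductcoloring}, and then restriction to $\{0,\ldots,N_1N_2-1\}$ (a step the paper leaves implicit). The only difference is how you pick coprime moduli. The paper takes $N_1,N_2$ to be distinct primes in the allowed ranges, using a strengthening of Bertrand's postulate, whereas your prime $p\in(W_2,2W_2-2]$ paired with whichever of $W_1,W_1+1$ is not divisible by $p$ (or your ``cleaner alternative'') needs only the standard postulate; just delete the aside that at most one of $W_2,W_2+1$ can share a factor with $N_1$, which is false.
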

\begin{proof}
For $i \in \{1,2\}$, let $w(k;r_i) \leq N_i \leq 2w(k;r_i)-2$ be such that $N_1,N_2$ are distinct primes (such primes exist by a strengthening of Bertrand's postulate that follows from Erd\H{o}s' proof). By Lemma \ref{colorcycfromint}, there is a $2r_i$-coloring of $\mathbb{Z}_{N_i}$ without a monochromatic $k$-AP. By Lemma \ref{simpleproductcoloring}, there is a $(4r_1r_2)$-coloring of $\mathbb{Z}_{N_1N_2}\cong \mathbb{Z}_{N_1} \times \mathbb{Z}_{N_2}$ without a monochromatic $k$-AP. 
\end{proof}

In the introduction, we stated that the canonical van der Waerden number satisfies $H(k) = k^{\omega(k)}$ easily follows from our proof that $w(k;3)$ has super-exponential growth and from a simple product coloring. 
Indeed, repeatedly applying Lemma \ref{productvdwbound} with the bound $w(k;3) \geq 2^{k(\log^* k)/4}$ from Theorem \ref{super-exponential} gives that there is an absolute constant $c>0$ such that $w(k;r) \geq r^{ck \log^* k}$ for $r \geq 3$ and all $k$. Hence, $H(k) \geq w(k;k-1) \geq k^{ck \log^* k}$ (with a slightly smaller $c$). A better bound in terms of the constant $c$ comes from directly using Lemma \ref{simpleproductcoloring} on the three-colorings of the cyclic groups that we get with no monochromatic $k$-AP. Of course, this is still considerably weaker than the bound $w(k;r) \geq r^{(1-o(1))k\log k}$ from Theorem \ref{powervdw} we get when $r$ is super-polylogarithmic in $k$.

\end{document}